\def\section{\@startsection{section}{1}{\z@}{-4.5ex plus -1ex minus
    -.2ex}{2.7ex plus .2ex}{\Large\bf}}
\newtheorem{lemma}{Lemma}
\newtheorem{corollary}{Corollary}
\newtheorem{remark}{Remark}
\newtheorem{theorem}{Theorem}
\numberwithin{equation}{section}
\renewenvironment{proof}{ 
\noindent{\bf Proof.} \rm}{\penalty-20\null\hfill $\square$}
\newcommand{\bbF}{{\mathbb F}}
\newcommand{\bbH}{{\mathbb H}}
\newcommand{\bbI}{{\mathbb I}}
\newcommand{\N}{{\mathbb N}}
\newcommand{\bbO}{{\mathbb O}}
\newcommand{\R}{{\mathbb R}}
\newcommand{\bfe}{\mathbf{e}}
\newcommand{\bff}{\mathbf{f}}
\newcommand{\bfg}{\mathbf{g}}
\newcommand{\bfh}{\mathbf{h}}
\newcommand{\bfn}{\mathbf{n}}
\newcommand{\bfr}{\mathbf{r}}
\newcommand{\bfu}{\mathbf{u}}
\newcommand{\bfv}{\mathbf{v}}
\newcommand{\bfw}{\mathbf{w}}
\newcommand{\bfx}{\mathbf{x}}
\newcommand{\bfF}{\mathbf{F}}
\newcommand{\bfG}{\mathbf{G}}
\newcommand{\bfL}{\mathbf{L}}
\newcommand{\bfV}{\mathbf{V}}
\newcommand{\bfW}{\mathbf{W}}
\newcommand{\cA}{{\cal A}}
\newcommand{\cC}{{\cal C}}
\newcommand{\cO}{{\cal O}}
\newcommand{\rmd}{\mathrm{d}}
\newcommand{\rme}{\mathrm{e}}
\newcommand{\br}{\hspace{0.7pt}}
\renewcommand{\div}{\mathrm{div}\,}
\newcommand{\bfzero}{\mathbf{0}}
\newcommand{\dist}{\mathrm{dist}}
\newcommand{\supp}{\mathrm{supp}\,}
\newcommand{\Gammai}{\Gamma_{\hspace{-1.1pt} \rm in}}
\newcommand{\Gammao}{\Gamma_{\hspace{-1.1pt} \rm out}}
\newcommand{\Gammaw}{\Gamma_{\hspace{-1.1pt} P}}
\newcommand{\Gammam}{\Gamma_{\hspace{-1pt} 0}}
\newcommand{\Gammap}{\Gamma_{\hspace{-1pt} 1}}
\newcommand{\gammai}{\gamma_{\rm in}}
\newcommand{\gammao}{\gamma_{\rm out}}
\newcommand{\Omegah}{\widehat{\Omega}}
\newcommand{\Am}{A_0}
\newcommand{\Ap}{A_1}
\newcommand{\Bm}{B_0}
\newcommand{\Bp}{B_1}
\newcommand{\bfgs}{\bfg_{\displaystyle *}}
\newcommand{\cCs}{\boldsymbol{\cC}^{\infty}_{\sigma}(\Omega)}
\newcommand{\Vs}[1]{\bfV_{\sigma}^{1,{#1}}\hspace{-0.6pt}(\Omega)}
\newcommand{\Vsd}[1]{\bfV_{\sigma}^{-1,{#1}}\hspace{-0.6pt}(\Omega)}
\newcommand{\vs}[1]{\bfV_{\sigma}^{1,{#1}}}
\newcommand{\vsd}[1]{\bfV_{\sigma}^{-1,{#1}}}
\newcommand{\Vsdelta}[1]{\bfV_{\sigma}^{1,{#1}}\hspace{-0.6pt}
     (\Omega^{\delta})}
\newcommand{\Vsddelta}[1]{\bfV_{\sigma}^{-1,{#1}}\hspace{-0.6pt}
     (\Omega^{\delta})}
\newcommand{\bfWp}{\bfW_{\rm per}}
\newcommand{\bfvt}{\widetilde{\bfv}}
\newcommand{\vt}{\widetilde{v}}
\newcommand{\pt}{\widetilde{p}}
\newcommand{\bfft}{\widetilde{\bff}}
\newcommand{\bfht}{\widetilde{\bfh}}
\newcommand{\hht}{\widetilde{h}}
\newcommand{\bbFt}{\widetilde{\bbF}}
\newcommand{\bfFt}{\widetilde{\bfF}}
\newcommand{\lclosed}{[\hbox to 1pt{}}
\newcommand{\rclosed}{\hbox to 1pt{}]}
\newcommand{\blangle}{\bigl\langle}
\newcommand{\brangle}{\bigr\rangle}
\newcounter{constants}
\newcommand{\cn}[2]{ \addtocounter{constants}{1}
\newcounter{c#1#2}
\setcounter{c#1#2}{\value{constants}} c_{\arabic{c#1#2}} }
\newcommand{\cc}[2]{c_{\arabic{c#1#2}}}
\definecolor{lightgrey}{rgb}{0.82,0.82,0.82}
\begin{document}

\title{\LARGE \bf The maximum regularity property of the steady Stokes
problem associated with a flow through a profile cascade in
$L^r$--framework}

\author{Tom\'a\v s Neustupa}

\date{}

\maketitle

\begin{abstract}
The paper deals with the Stokes problem, associated with a flow of
a viscous incompressible fluid through a spatially periodic
profile cascade. We use results from \cite{TNe4} (the maximum
regularity property in the $L^2$--framework) and \cite{TNe5} (the
weak solvability in $W^{1,r}$), and extend the findings on the
maximum regularity property to the general $L^r$--framework (for
$1<r<\infty$). Using the reduction to one spatial period $\Omega$,
the problem is formulated by means of boundary conditions of three
types: the conditions of periodicity on curves $\Gammam$ and
$\Gammap$, the Dirichlet boundary conditions on $\Gammai$ and
$\Gammaw$ and an artificial ``do nothing''--type boundary
condition on $\Gammao$ (see Fig.~1). We show that, although domain
$\Omega$ is not smooth and different types of boundary conditions
``meet'' in the vertices of $\partial\Omega$, the considered
problem has a strong solution with the maximum regularity property
for ``smooth'' data. We explain the sense in which the ``do
nothing'' boundary condition is satisfied for both weak and strong
solutions.
\end{abstract}

\noindent
{\it AMS math.~classification (2000):} \ 35Q30, 76D03, 76D05.

\noindent
{\it Keywords:} \ The Stokes problem, artificial boundary
condition, maximum regularity property.

\section{Introduction} \label{S1}

{\bf One spatial period: domain $\Omega$.} \ Mathematical models
of a flow through a three--dimensional turbine wheel often use the
reduction to two space dimensions, where the flow
\begin{wrapfigure}[13]{r}{81mm}
\begin{center}
  \setlength{\unitlength}{0.5mm}
  \begin{picture}(162,125)
  \put(5,60){\vector(1,0){151}} \put(20,37){\vector(0,1){95}}
  \put(143,64){$x_1$} \put(23,125){$x_2$}
  \put(28.5,106.4){\vector(0,1){3.2}}
  \dashline[+30]{2.2}(28.5,104.9)(28.5,75.2)
  \dashline[+30]{2.2}(130.1,70)(130.1,96)
  \dashline[+30]{2.2}(130.1,108)(130.1,132)
  \dashline[+30]{2.2}(130.1,40)(130.1,31)
  \put(118,100){$x_1=d$}
  \put(28.5,73.6){\vector(0,-1){3.7}} \put(31,97){$\tau$}
  \put(23,45){$\gammai$} \put(133,123){$\gammao$}
  \thicklines 
  \color{lightgrey}
\put(40.2,90){\line(1,0){35}} \put(40.2,90.4){\line(1,0){34}}
\put(40.2,90.8){\line(1,0){33}} \put(40.4,91.2){\line(1,0){31.6}}
\put(40.4,91.6){\line(1,0){30.4}} \put(40.5,92){\line(1,0){29.3}}
\put(40.5,92.4){\line(1,0){28.1}}\put(40.8,92.8){\line(1,0){26.5}}
\put(41.0,93.2){\line(1,0){25.0}}\put(41.5,93.6){\line(1,0){23.3}}
\put(41.8,94.0){\line(1,0){21.3}}\put(42.1,94.4){\line(1,0){19.3}}
\put(42.5,94.8){\line(1,0){17.3}}\put(43.2,95.2){\line(1,0){14.8}}
\put(44.3,95.6){\line(1,0){11.8}}\put(46.3,96.0){\line(1,0){6.8}}
\put(40.2,89.6){\line(1,0){36.1}}\put(40.4,89.2){\line(1,0){36.7}}
\put(40.6,88.8){\line(1,0){37.4}}\put(40.8,88.4){\line(1,0){38.2}}
\put(41.1,88.0){\line(1,0){38.9}}\put(41.5,87.6){\line(1,0){39.4}}
\put(42.1,87.2){\line(1,0){39.5}}\put(42.7,86.8){\line(1,0){39.8}}
\put(43.7,86.4){\line(1,0){39.7}}\put(44.5,86){\line(1,0){39.6}}
\put(45.7,85.6){\line(1,0){39.3}}\put(46.8,85.2){\line(1,0){39.1}}
\put(48.5,84.8){\line(1,0){38.2}}\put(50.4,84.4){\line(1,0){37.0}}
\put(52.3,84.0){\line(1,0){35.8}}\put(55.0,83.6){\line(1,0){33.8}}
\put(58.3,83.2){\line(1,0){31.2}}\put(61.7,82.8){\line(1,0){28.6}}
\put(65.3,82.4){\line(1,0){25.6}}\put(68.0,82.0){\line(1,0){23.5}}
\put(70.2,81.6){\line(1,0){21.9}}\put(72.2,81.2){\line(1,0){20.9}}
\put(74.2,80.8){\line(1,0){19.4}}\put(75.9,80.4){\line(1,0){18.3}}
\put(77.5,80.0){\line(1,0){17.3}}\put(79.0,79.6){\line(1,0){16.3}}

\put(80.8,79.2){\line(1,0){15.3}}\put(82.3,78.8){\line(1,0){14.4}}
\put(84.0,78.4){\line(1,0){13.3}}\put(85.5,78.0){\line(1,0){12.6}}
\put(87,77.6){\line(1,0){11.7}}\put(88.5,77.2){\line(1,0){10.8}}
\put(90.0,76.8){\line(1,0){9.9}}\put(91,76.4){\line(1,0){9.3}}
\put(92.2,76.0){\line(1,0){8.8}}\put(93.4,75.6){\line(1,0){8.2}}
\put(94.4,75.2){\line(1,0){7.4}}\put(95.8,74.8){\line(1,0){6.7}}
\put(97.1,74.4){\line(1,0){5.6}}\put(98.3,74.0){\line(1,0){4.6}}
\put(99.5,73.6){\line(1,0){3.5}}\put(100.3,73.2){\line(1,0){3.0}}
\color{black} 
\qbezier(40,90)(40,100)(63,94) \qbezier(40,90)(40,85)(60,83)
\qbezier(63,94)(86,87)(98,78) \qbezier(60,83)(80,80.5)(97.2,74.3)
\qbezier(98,78)(109,70)(97.2,74.3) \put(51,88){$P$}
  \thicklines 
  \put(20,69){\line(0,1){40}} \put(130.2,40){\line(0,1){40}}
  \qbezier(20,69)(70,76)(130,40) \qbezier(20,109)(70,116)(130,80)
  \thinlines
  \put(9.5,89){$\Gammai$} \put(116.2,67){$\Gammao$}
  \put(75,107){$\Gammap$} \put(75,67){$\Gammam$}
  \put(37,79){$\Gammaw$}
  \put(109,78){$\Omega$}
  \put(8,67){$\Am$} \put(8,107){$\Ap$}
  \put(133,39){$\Bm$} \put(133,78){$\Bp$}
  \thicklines
  \put(43,23){Fig.~1: \ Domain $\Omega$}
  \end{picture}
\end{center}
\end{wrapfigure}
is studied as a flow through an infinite planar profile cascade.
In an appropriately chosen Cartesian coordinate system, the
profiles in the cascade periodically repeat with the period $\tau$
in the $x_2$--direction. It can be naturally assumed that the flow
is $\tau$--periodic in variable $x_2$, too. This enables one to
study the flow through one spatial period, which contains just one
profile -- see domain $\Omega$ and profile $P$ on Fig.~1. This
approach is used e.g.~in \cite{DFF}, \cite{KLP}, \cite{SPKF},
where the authors present the numerical analysis of the models or
corresponding numerical simulations, and in the papers,
\cite{FeNe1}--\cite{FeNe3} and \cite{TNe1}--\cite{TNe3}, devoted
to theoretical analysis of the mathema\-tical models.

We assume that a viscous incompressible fluid flows into the
cascade through the straight line $\gammai$ (the $x_2$--axis, the
inflow) and essentially leaves the cascade through the straight
line $\gammao$, whose equation is $x_1=d$ (the outflow). By
``essentially'' we mean that we do not exclude possible reverse
flows on the line $\gammao$. The parts of $\partial\Omega$ (the
boundary of $\Omega$), lying on the straight lines $\gammai$ and
$\gammao$ are the line segments $\Gammai\equiv\Am\Ap$ and
$\Gammao\equiv\Bm\Bp$ of length $\tau$, respectively. The other
parts of $\partial\Omega$ are denoted by $\Gammaw$ (the boundary
of profile $P$), $\Gammam$ and $\Gammap\equiv\Gammam+\tau
\br\bfe_2$, see Fig.~1. (We denote by $\bfe_2$ is the unit vector
in the $x_2$--direction.) We may assume, without loss of
generality, that domain $\Omega$ is Lipschitzian and the curves
$\Gammam$ and $\Gammap$ are of the class $C^{\infty}$.

\vspace{4pt} \noindent
{\bf The Stokes boundary--value problem on one spatial period.} \
The fluid flow is described by the Navier-Stokes equations. An
important role in theoretical studies of these equations play the
properties of solutions to the steady Stokes problem. The steady
Stokes equation, which comes from the momentum equation in the
Navier--Stokes system if one neglects the derivative with respect
to time and the nonlinear ``convective'' term, has the form
\begin{equation}
-\nu\Delta\bfu+\nabla p\ =\ \bff. \label{1.1}
\end{equation}
It is studied together with the equation of continuity (=
condition of incompressibility)
\begin{equation}
\div\bfu\ =\ 0. \label{1.2}
\end{equation}
The unknowns are $\bfu=(u_1,u_2)$ (the velocity) and $p$ (the
pressure). The positive constant $\nu$ is the kinematic
coefficient of viscosity and $\bff$ denotes the external body
force. The density of the fluid can be without loss of generality
supposed to be equal to one. The system (\ref{1.1}), (\ref{1.2})
is completed by appropriate boundary conditions on
$\partial\Omega$. One can naturally assume that the velocity
profile on $\Gammai$ is known, which leads to the inhomogeneous
Dirichlet boundary condition
\begin{equation}
\bfu\ =\ \bfg \qquad \mbox{on}\ \Gammai. \label{1.3}
\end{equation}
Further, we consider the homogeneous Dirichlet boundary condition
\begin{equation}
\bfu\ =\ \bfzero \qquad \mbox{on}\ \Gammaw \label{1.4}
\end{equation}
and the conditions of periodicity on $\Gammam$ and $\Gammap$
\begin{align}
\bfu(x_1,x_2+\tau)\ &=\ \bfu(x_1,x_2) && \mbox{for}\
\bfx\equiv(x_1,x_2)\in\Gammam, \label{1.5} \\ \noalign{\vskip 4pt}
\frac{\partial\bfu}{\partial\bfn}(x_1,x_2+\tau)\ &=\
-\frac{\partial\bfu}{\partial\bfn}(x_1,x_2) &&
\mbox{for}\ \bfx\equiv(x_1,x_2)\in\Gammam, \label{1.6} \\
\noalign{\vskip 4pt}
p(x_1,x_2+\tau)\ &=\ p(x_1,x_2) && \mbox{for}\
\bfx\equiv(x_1,x_2)\in\Gammam. \label{1.7}
\end{align}
Finally, we consider the artificial boundary condition
\begin{equation}
-\nu\, \frac{\partial\bfu}{\partial\bfn}+p\br\bfn\ =\ \bfh \qquad
\mbox{on}\ \Gammao, \label{1.8}
\end{equation}
where $\bfh$ is a given vector--function on $\Gammao$ and $\bfn$
denotes the unit outer normal vector, which is equal to
$\bfe_1\equiv(1,0)$ on $\Gammao$. The boundary condition
(\ref{1.8}) (with $\bfh=\bfzero$) is often called the ``do
nothing'' condition, because it naturally follows from an
appropriate weak formulation of the boundary--value problem, see
\cite{Glow} and \cite{HeRaTu}.

\vspace{4pt} \noindent
{\bf On some previous related results.} \ In studies of the
Navier--Stokes equations in channels or profile cascades with
artificial boundary conditions on the outflow, many authors use
various modifications of condition (\ref{1.8}). (See
e.g.~\cite{BrFa}), \cite{FeNe1}, \cite{FeNe2}, \cite{FeNe3},
\cite{TNe1}, \cite{TNe2}, \cite{TNe3}.) The reason is that, while
condition (\ref{1.8}) does not enable one to control the amount of
kinetic energy in $\Omega$ in the case of a reverse flow on
$\Gammao$, the modifications are suggested so that one can derive
an energy inequality, and consequently prove the existence of weak
solutions. In papers \cite{KuSka} and \cite{Ku}, the authors use
the boundary condition on an outflow in connection with a flow in
a channel, and they prove the existence of weak solutions  of the
Navier--Stokes equations for ``small data''. Possible reverse
flows (again on an ``outflow'' of a channel) are controlled by
means of additional conditions in \cite{KraNe1}, \cite{KraNe2},
\cite{KraNe3}, where the Navier--Stokes equations are replaced by
the Navier--Stokes variational inequalities.

The regularity up to the boundary of existing weak solutions
(stationary or time--dependent) to the Navier--Stokes equations
with the boundary condition (\ref{1.8}) on a part of the boundary
has not been studied in literature yet. This is mainly because one
at first needs the information on regularity of solutions of the
corresponding steady Stokes problem, and there are only two papers
which bring this information: 1) paper \cite{KuBe}, where the
authors studied a flow in a 2D channel $D$ of a special geometry,
considering the homogeneous Dirichlet boundary condition on the
walls and condition (\ref{1.8}) on the outflow, and proved that
the velocity is in $\bfW^{2-\beta,2}(D)$ for certain $\beta>0$
depending on the geometry of $D$, provided that
$\bff\in\bfL^2(D)$, and 2) paper \cite{TNe4}, where the inclusion
of the solution $(\bfu,p)$ of the Stokes problem
(\ref{1.1})--(\ref{1.8}) to $\bfW^{2,2}(\Omega)\times
W^{1,2}(\Omega)$ has been recently proven under natural
assumptions on $\bff$, $\bfg$ and $\bfh$.

In this context, note that one usually says that the Stokes
problem has the {\it maximum regularity property,} if the solution
$\bfu$, respectively $p$, has by two, respectively one, spatial
derivatives more than function $\bff$, integrable with the same
power as $\bff$.

In general, the maximum regularity property of solutions of the
steady Stokes problem is mostly known if domain $\Omega$ is
sufficiently smooth, see e.g.~\cite[Theorem I.2.2]{Te},
\cite[Theorem III.3]{La}, \cite[Theorem IV.6.1]{Ga} and
\cite[Theorem III.2.1.1]{So} for problems with inhomogeneous
Dirichlet boundary conditions, \cite{AmSe}, \cite{ChOsQi} for
problems with the Navier--type boundary condition and
\cite{AcAmCoGh}, \cite{EsGh}, \cite{ChQi} for problems with
Navier's boundary condition  on the whole boundary. Concerning
non--smooth domains, we can cite \cite{Gr2}, \cite{KeOs} and
\cite{Dau}, where the authors considered the Stokes problem in a
2D polygonal domain with the Dirichlet boundary conditions, and
the aforementioned paper \cite{TNe4}, where the maximum regularity
property of the Stokes problem (\ref{1.1})--(\ref{1.8}) has been
proven in the $L^2$--framework.

\vspace{4pt} \noindent
{\bf On the results of this paper.} \ The main purpose of this
paper is to generalize the results from \cite{TNe4} from the
$L^2$--framework to the general $L^r$--framework for
$r\in(1,\infty)$. We use the results from \cite{TNe5}, where the
existence of a weak solution $\bfu\in\bfW^{1,r}(\Omega)$ to the
Stokes problem (\ref{1.1})--(\ref{1.5}), (\ref{1.8} is proven. It
is also shown in \cite{TNe5} that an associated pressure $p\in
L^r(\Omega)$ can be chosen so that $\bfu$ and $p$ satisfy
equations (\ref{1.1}), (\ref{1.2}) in the sense of distributions
in $\Omega$ and the boundary condition (\ref{1.8}) is satisfied as
an equality in $\bfW^{-1/r,r}(\Gammao)$. In this paper, we
consider smooth input data $\bff$, $\bfg$ and $\bfh$ and we prove
the existence of a strong solution $(\bfu,p)\in
\bfW^{2,r}(\Omega)\times W^{1,r}(\Omega)$ of the Stokes problem
(\ref{1.1})--(\ref{1.8}), see Theorem \ref{T2}. We also explain
how this theorem can be generalized so that it yields $(\bfu,p)\in
\bfW^{s+2,r}(\Omega)\times W^{s+1,r}(\Omega)$ for
$s\in\{0\}\cup\N$. These results do not follow from the previous
cited papers on the Stokes problem, because our domain $\Omega$ is
not smooth and we consider three different types of boundary
conditions on $\partial\Omega$. Two types of conditions ``meet''
in the corners $\Am$, $\Ap$, $\Bm$ and $\Bp$ of domain $\Omega$.
As auxiliary results of an independent importance, we present
Lemma \ref{L3} (on an appropriate extension of the velocity
profile $\bfg$ from $\Gammai$ to $\Omega$).

Finally, note that the presented results on the $L^r$--maximum
regularity property of the considered Stokes--type problem play a
fundamental role in studies of regularity and the structure of the
set of weak and strong solutions to the corresponding
Navier--Stokes problem. A paper on this theme is being prepared.

\section{Notation and auxiliary results} \label{S2}

{\bf Notation.} \ We assume that $1<r<\infty$ throughout the
paper.

\begin{list}{$\circ$}
{\setlength{\topsep 0.5mm}
\setlength{\itemsep 0.5mm}
\setlength{\leftmargin 14pt}
\setlength{\rightmargin 0pt}
\setlength{\labelwidth 6pt}}

\item
Recall that $\Omega$ is a Lipschitzian domain in $\R^2$, sketched
on Fig.~1. Its boundary consists of the curves $\Gammai$,
$\Gammao$, $\Gammam$, $\Gammap$ and $\Gammaw$, described in
Section \ref{S1}. We assume that the curves $\Gammap$, $\Gammam$
are of the class $C^{\infty}$ and $\Gammaw$ is of the class $C^2$.
We denote by $\bfn=(n_1,n_2)$ the outer normal vector field on
$\partial\Omega$.

\item
$\Gammai^0$, respectively $\Gammao^0$, denotes the open line
segment with the end points $\Am,\, \Ap$, respectively $\Bm,\,
\Bp$. Similarly, $\Gammam^0$, respectively $\Gammap^0$ denotes the
curve $\Gammam$, respectively $\Gammap$, without the end points
$\Am$, $\Bm$, respectively $\Ap$, $\Bp$.

\item
We denote by $\|\, .\, \|_r$ the norm in $L^r(\Omega)$ or in
$\bfL^r(\Omega)$ or in $L^r(\Omega)^{2\times 2}$. Similarly, $\|\,
.\, \|_{s,r}$ is the norm in $W^{s,r}(\Omega)$ or in
$\bfW^{s,r}(\Omega)$ or in $W^{s,r}(\Omega)^{2\times 2}$.

\item
Recall that $\cO=\R^2_{(0,d)}\smallsetminus
\cup_{k=-\infty}^{\infty}P_k$. For $k\in\N$, we denote by
$W^{k,r}_{\rm per}(\cO)$ the space of functions from
$W^{k,r}_{loc}(\cO)$, $\tau$--periodic in variable $x_2$.

\item
$W^{k,r}_{\rm per}(\Omega)$ is the space of functions, that can be
extended from $\Omega$ to $\cO$ as functions in $W^{k,r}_{\rm
per}(\cO)$. (The traces of these functions on $\Gammam$ and
$\Gammap$ satisfy the condition of periodicity, analogous to
(\ref{1.5}).)

\item
$W^{k-1/r,r}_{per}(\gammao)$ (for $k\in\N$) denotes the space of
$\tau$--periodic functions in $W^{k-1/r,r}_{loc}(\gammao)$.

\item
$W^{k-1/r,r}_{\rm per}(\Gammao)$ is the space of functions from
$W^{k-1/r,r}(\Gammao)$, that can be extended from $\Gammao$ to
$\gammao$ as functions in $W^{k-1/r,r}_{\rm per}(\gammao)$.

\item
Vector functions and spaces of vector functions are denoted by
boldface letters. Spaces of 2nd--order tensor functions are
denoted by the superscript $2\times 2$.

\item
$\cCs$ denotes the linear space of all infinitely differentiable
divergence--free vector functions in $\overline{\Omega}$, whose
support is disjoint with $\Gammai\cup\Gammaw$ and that satisfy,
together with all their derivatives (of all orders), the condition
of periodicity (\ref{1.5}). Note that each $\bfw\in\cCs$
automatically satisfies the outflow condition
$\int_{\Gammao}\bfw\cdot\bfn\; \rmd l=0$.

\item
$\Vs{r}$ is the closure of $\cCs$ in $\bfW^{1,r}(\Omega)$. It is a
space of divergence--free vector functions from
$\bfW^{1,r}(\Omega)$, whose traces on $\Gammai\cup\Gammaw$ are
equal to zero and the traces on $\Gammam$ and $\Gammap$ satisfy
the condition of periodicity (\ref{1.5}). Since functions from
$\Vs{r}$ are equal to zero on $\Gammai\cup\Gammaw$ (in the sense
of traces) and domain $\Omega$ is bounded, the norm in $\Vs{r}$ is
equivalent to $\|\nabla.\, \|_r$.

\item
The conjugate exponent to $r$ is denoted by $r'$, the dual space
to $\bfW^{1,r'}_0(\Omega)$ is denoted by $\bfW^{-1,r}(\Omega)$ and
the dual space to $\bfW^{1,r'}(\Omega)$ is denoted by
$\bfW^{-1,r}_0(\Omega)$. The corresponding norms are denoted by
$\|\, .\, \|_{\bfW^{-1,r}}$ and $\|\, .\, \|_{\bfW^{-1,r}_0}$,
respectively.

\item
$\Vsd{r}$ denotes the dual space to $\Vs{r'}$. The duality pairing
between $\Vsd{r}$ and $\Vs{r'}$ is denoted by $\langle\, .\, ,\,
.\, \rangle_{(\vsd{r},\vs{r'})}$. The norm in $\Vsd{r}$ is denoted
by $\|\, .\, \|_{\vsd{r}}$.

\item
Denote by $\cA_r$ the linear mapping of $\Vs{r}$ to $\Vsd{r}$,
defined by the equation
\begin{equation}
\blangle\cA_r\bfv,\bfw\brangle_{(\vsd{r},\vs{r'})}\ =\
(\nabla\bfv,\nabla\bfw) \qquad \mbox{for}\ \bfv\in\Vs{r}\
\mbox{and}\ \bfw\in\Vs{r'}, \label{2.1}
\end{equation}
where $(\nabla\bfv,\nabla\bfw)$ represents the integral
$\int_{\Omega}\nabla\bfv:\nabla\bfw\; \rmd\bfx$.

\item
$\R^2_{d-}$ denotes the half-plane $\{(x_1,x_2)\in\R^2;\ x_1<d\}$.

\item
We use $c$ as a generic constant, i.e.~a constant whose values may
change throughout the text.

\end{list}

Further, we cite some auxiliary results from previous papers. They
all concern in a certain sense the equation $\cA_r\bfv=\bff$,
which can be interpreted as the weak Stokes problem. The first
lemma comes from \cite[Theorem 1]{TNe5}:

\begin{lemma} \label{L1}
$\cA_r$ is a bounded, closed and one--to--one operator from
$\Vs{r}$ to $\Vsd{r}$ with $D(\cA_r)=\Vs{r}$ and
$R(\cA_r)=\Vsd{r}$. The adjoint operator is to $\cA_r$ is
$\cA_{r'}$.
\end{lemma}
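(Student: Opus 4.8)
The plan is to dispose of the structural claims quickly and to put all the effort into the isomorphism property. Boundedness I would read straight off the definition and Hölder's inequality: for $\bfv\in\Vs{r}$, $\bfw\in\Vs{r'}$ one has $|\blangle\cA_r\bfv,\bfw\brangle_{(\vsd{r},\vs{r'})}|=|(\nabla\bfv,\nabla\bfw)|\le\|\nabla\bfv\|_r\,\|\nabla\bfw\|_{r'}$, and since the $\Vs{r}$-norm is equivalent to $\|\nabla\,\cdot\,\|_r$ (as recorded in the list of notations) this gives $\|\cA_r\bfv\|_{\vsd{r}}\le c\,\|\bfv\|_{\Vs{r}}$. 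A bounded operator defined on all of $\Vs{r}$ has $D(\cA_r)=\Vs{r}$ and is automatically closed, so those two assertions come for free. The adjoint I would identify through the symmetry of the generating form: since $(\nabla\bfv,\nabla\bfw)=(\nabla\bfw,\nabla\bfv)$, the definition gives $\blangle\cA_r\bfv,\bfw\brangle=\blangle\cA_{r'}\bfw,\bfv\brangle$ for all $\bfv\in\Vs{r}$, $\bfw\in\Vs{r'}$, and because these spaces are reflexive this is precisely the statement $\cA_r^{*}=\cA_{r'}$.

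The core is that $\cA_r$ is one-to-one with $R(\cA_r)=\Vsd{r}$. I would begin with the Hilbert case $r=2$: the form $(\nabla\bfv,\nabla\bfw)$ is bounded and, by the Poincaré inequality (valid because members of $\Vs{2}$ vanish in the trace sense on $\Gammai\cup\Gammaw$), coercive on $\Vs{2}$, so Lax--Milgram makes $\cA_2$ an isomorphism of $\Vs{2}$ onto $\Vsd{2}$. For general $r$ the goal is the single a~priori estimate
\[ \|\nabla\bfv\|_r\ \le\ c\,\|\cA_r\bfv\|_{\vsd{r}} \qquad (\bfv\in\Vs{r}), \]
valid for every $r\in(1,\infty)$. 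Such an estimate makes $\cA_r$ injective with closed range, and surjectivity then follows by duality: the same estimate with $r$ replaced by $r'$ makes $\cA_{r'}=\cA_r^{*}$ injective, so $\overline{R(\cA_r)}={}^{\perp}(\ker\cA_r^{*})=\Vsd{r}$, and closedness of the range upgrades this to $R(\cA_r)=\Vsd{r}$.

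Thus everything reduces to the $L^r$ a~priori estimate, which is where I expect the real difficulty. The natural route is a partition of unity splitting $\Omega$ into interior patches, collars along the smooth pieces $\Gammaw,\Gammam,\Gammap$, and a neighbourhood of $\Gammao$: on the interior one invokes the classical $L^r$-solvability of the Stokes system; along $\Gammaw$ one flattens the boundary and uses half-space estimates; along $\Gammam,\Gammap$ one combines the half-space estimates with the periodicity conditions (\ref{1.5})--(\ref{1.7}); and near $\Gammao$ the ``do nothing'' condition (\ref{1.8}) acts as a Neumann-type condition, calling for the matching boundary estimates. The genuinely delicate part, and the main obstacle, is the analysis near the four corners $\Am,\Ap,\Bm,\Bp$, where boundary conditions of different types meet and the solution may develop corner singularities; one must check that, for the present opening angles and the Dirichlet/periodic/do-nothing combinations, the singular exponents stay on the side that keeps the $W^{1,r}$ bound in force for all $r\in(1,\infty)$. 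This corner analysis is exactly the content of \cite[Theorem~1]{TNe5} underlying the lemma; granting it, assembling the local bounds into the global estimate displayed above is routine.
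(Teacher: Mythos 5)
The paper contains no proof of Lemma \ref{L1} at all: it is imported verbatim, the entire justification being the sentence ``The first lemma comes from \cite[Theorem 1]{TNe5}.'' So the only comparison available is between your proposal and that citation. Your soft steps are correct and go beyond what the paper records: boundedness follows from H\"older's inequality together with the equivalence of $\|\nabla\cdot\,\|_r$ with the full norm on $\Vs{r}$; closedness and $D(\cA_r)=\Vs{r}$ are automatic for a bounded, everywhere-defined operator; the identification $\cA_r^{*}=\cA_{r'}$ follows from the symmetry of the form $(\nabla\bfv,\nabla\bfw)$ and reflexivity of the spaces (note $\Vsd{r}$ is by definition the dual of $\Vs{r'}$, so the bookkeeping works out exactly as you say); Lax--Milgram settles $r=2$; and the closed-range/duality mechanism correctly converts the single a~priori estimate $\|\nabla\bfv\|_r\le c\,\|\cA_r\bfv\|_{\vsd{r}}$, assumed for both $r$ and $r'$, into injectivity and surjectivity. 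This decomposition -- isolating one estimate as the entire hard content of the lemma -- is a genuine and correct piece of analysis that the paper does not perform.

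The problem is that your proposal is circular at precisely the point where all the substance lies: the a~priori estimate is justified by appeal to \cite[Theorem 1]{TNe5}, which \emph{is} the lemma being proven (that theorem asserts the bijectivity of $\cA_r$, hence already contains the estimate and everything you derive from it). Read as a self-contained argument, you have derived the lemma from the lemma; read charitably, you have done exactly what the paper does -- cite \cite{TNe5} -- plus correct but logically redundant scaffolding. One further remark on your sketch of the hard part: the corner-exponent picture is probably not the right mechanism here. Periodicity is not a local boundary condition on the edges $\Gammam$, $\Gammap$, so a Kondratiev-type analysis at $\Am,\Ap,\Bm,\Bp$ does not directly apply; rather, the conditions (\ref{1.5})--(\ref{1.7}) identify $\Gammam$ with $\Gammap$, so that in the quotient (a periodic strip minus the profile $P$) the sets $\Gammai$, $\Gammaw$ and $\Gammao$ become three \emph{disjoint} closed curves, each carrying a single type of boundary condition, and the corners disappear altogether. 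That, rather than a fortunate location of singular exponents, is the plausible reason the $W^{1,r}$ theory can hold for every $r\in(1,\infty)$, whereas for a genuine mixed-condition corner one would expect a restricted range of $r$.
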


\noindent
The next lemma follows from \cite[Theorem 2.5]{GeHeHi}.

%
%
\begin{lemma} \label{L2}
Let $\bff\in\bfW^{-1,r}_0(\Omega)$. Then there exists $\bbF\in
L^r(\Omega)^{2\times 2}$, satisfying $\div\bbF=\bff$ in the sense
of distributions in $\Omega$ and
\begin{equation}
\|\bbF\|_r\ \leq\ c\, \|\bff\|_{\bfW^{-1,r}_0}\, , \label{2.2}
\end{equation}
where $c$ is independent of $\bff$ and $\bbF$.
\end{lemma}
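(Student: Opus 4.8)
The plan is to read Lemma \ref{L2} as a surjectivity statement for the divergence operator, proved through the closed range theorem together with a Poincar\'e--Friedrichs inequality. Since the conclusion only demands $\div\bbF=\bff$ \emph{in the sense of distributions}, I would first throw away the information carried by $\bff$ on functions with nonzero trace. Restricting the functional $\bff\in\bfW^{-1,r}_0(\Omega)=(\bfW^{1,r'}(\Omega))^{*}$ to the subspace $\bfW^{1,r'}_0(\Omega)$ produces $\bff_0\in\bfW^{-1,r}(\Omega)$ with
\[
\|\bff_0\|_{\bfW^{-1,r}}\ \leq\ \|\bff\|_{\bfW^{-1,r}_0},
\]
because the supremum defining the restriction norm runs over a smaller set of test functions. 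It then suffices to represent $\bff_0$ as a divergence: since $\bfC_0^{\infty}(\Omega)\subset\bfW^{1,r'}_0(\Omega)$, any $\bbF$ with $\div\bbF=\bff_0$ automatically satisfies $\div\bbF=\bff$ in the sense of distributions in $\Omega$, and an $L^r$--bound on $\bbF$ in terms of $\|\bff_0\|_{\bfW^{-1,r}}$ gives (\ref{2.2}).

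Next I would introduce the bounded linear operator $D\colon L^r(\Omega)^{2\times 2}\to\bfW^{-1,r}(\Omega)$ defined by
\[
\blangle D\bbF,\bfphi\brangle\ =\ -\int_{\Omega}\bbF:\nabla\bfphi\;\rmd\bfx \qquad \mbox{for}\ \bfphi\in\bfW^{1,r'}_0(\Omega),
\]
which is exactly the distributional divergence tested against $\bfW^{1,r'}_0(\Omega)$. Using reflexivity of all spaces involved, the adjoint of $D$ is the (negative) gradient $D^{*}\colon\bfW^{1,r'}_0(\Omega)\to L^{r'}(\Omega)^{2\times 2}$, $\bfphi\mapsto-\nabla\bfphi$, as one checks immediately from the pairing above. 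The whole task thus reduces to proving that $D$ is onto with a bounded right inverse.

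The analytic input is the Friedrichs--Poincar\'e inequality on the bounded domain $\Omega$: there is a constant $\beta>0$ with
\[
\|\nabla\bfphi\|_{r'}\ \geq\ \beta\,\|\bfphi\|_{1,r'} \qquad \mbox{for all}\ \bfphi\in\bfW^{1,r'}_0(\Omega),
\]
which says precisely that $D^{*}$ is injective and bounded below. By the closed range theorem, a bounded-below adjoint forces $D$ to have closed range equal to the annihilator of $\ker D^{*}=\{\bfzero\}$, i.e.~$R(D)=\bfW^{-1,r}(\Omega)$, and yields a right inverse of norm at most $\beta^{-1}$. Applying this to $\bff_0$ gives $\bbF\in L^r(\Omega)^{2\times 2}$ with $\div\bbF=\bff_0$ and $\|\bbF\|_r\leq\beta^{-1}\|\bff_0\|_{\bfW^{-1,r}}\leq\beta^{-1}\|\bff\|_{\bfW^{-1,r}_0}$, which is (\ref{2.2}) with $c=\beta^{-1}$.

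I expect the only genuine obstacle to be the bounded-below (inf--sup) estimate for $D^{*}$; everything else is a routine application of duality and the open mapping principle, and note that boundedness of $\Omega$ alone already suffices for the Friedrichs inequality. This is also the content one may simply quote from \cite[Theorem 2.5]{GeHeHi}, whose Bogovskii--operator construction supplies the bounded right inverse of $\div$ explicitly; the duality argument sketched here is the abstract reformulation of that statement.
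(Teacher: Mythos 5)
Your proposal is correct, and it proves exactly what Lemma \ref{L2} asserts; but it is a genuinely different route from the paper's, which in fact gives no argument at all and simply quotes \cite[Theorem 2.5]{GeHeHi}, where the conclusion is obtained by extending the explicit Bogovskii integral operator to Sobolev spaces of negative order on bounded Lipschitz domains. Your reduction step is legitimate precisely because the lemma only demands $\div\bbF=\bff$ against test functions in $\bfC^{\infty}_0(\Omega)$; a stronger, ``trace--aware'' reading would even be false, since testing $-\int_{\Omega}\bbF:\nabla\bfphi\;\rmd\bfx$ with a constant $\bfphi$ gives zero while $\langle\bff,\bfphi\rangle$ need not vanish. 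Granted that, the closed-range/Poincar\'e argument --- equivalently, Hahn--Banach applied to the functional $\nabla\bfphi\mapsto-\langle\bff,\bfphi\rangle$ on the closed subspace $\{\nabla\bfphi:\ \bfphi\in\bfW^{1,r'}_0(\Omega)\}$ of $L^{r'}(\Omega)^{2\times 2}$ --- indeed yields some $\bbF$ with $\|\bbF\|_r\leq\beta^{-1}\|\bff\|_{\bfW^{-1,r}_0}$, which is (\ref{2.2}). What your argument buys is self-containedness and generality: it uses only boundedness of $\Omega$ (no boundary regularity) plus soft functional analysis. What the paper's citation buys is a single bounded \emph{linear} solution operator, constructed explicitly and acting uniformly on a scale of spaces, whose output is a limit of compactly supported fields; your $\bbF$, by contrast, is a non-canonical representative, obtained non-constructively and depending non-linearly on $\bff$, which deliberately forgets how $\bff$ acts on test functions with nonzero boundary trace. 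That distinction is invisible in the statement of Lemma \ref{L2}, but it is not idle in this paper: the tensor $\bbF$ is later paired in (\ref{2.3}) against functions $\bfw\in\Vs{r'}$ that do not vanish on $\Gammao$, and it enters the boundary condition (\ref{2.6}) through $\bbF\cdot\bfn$, so different representatives of the same distributional divergence generate genuinely different weak problems (this is exactly why Lemma \ref{L6} later constructs $\bbF$ with a prescribed normal trace). For the same reason, your closing claim that the duality argument is ``the abstract reformulation'' of \cite[Theorem 2.5]{GeHeHi} overstates the equivalence: that theorem is strictly stronger than mere surjectivity of $\div$ from $L^r(\Omega)^{2\times 2}$ onto $\bfW^{-1,r}(\Omega)$, even though either statement suffices for the lemma as written.
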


Define $\bfF\in\Vsd{r}$ by the formula
\begin{equation}
\blangle\bfF,\bfw\brangle_{(\vsd{r},\vs{r'})}\ :=\
-\int_{\Omega}\bbF:\nabla\bfw\; \rmd\bfx \label{2.3}
\end{equation}
for all $\bfw\in\Vs{r'}$. Obviously, $\|\bfF\|_{\vsd{r}}\leq c\,
\|\bbF\|_r$.

\begin{lemma} \label{L3}
Let $\bfg\in\bfW^{1-1/r,r}_{\rm per}(\Gammai)$ be a given function
on $\Gammai$. There exists a divergence--free extension
$\bfgs\in\bfW^{1,r}_{\rm per}(\Omega)$ of $\bfg$ from $\Gammai$ to
$\Omega$ and a constant $\cn05>0$, independent of $\bfg$, such
that

\vspace{4pt}
a) \ $\|\bfgs\|_{1,r}\leq\cc05\, \|\bfg\|_{1-1/r,r;\, \Gammai}$,

\vspace{4pt}
b) \ $\bfgs=(\Phi/\tau)\, \bfe_1$ \ in a neighbourhood of
$\Gammao$, where $\Phi=-\int_{\Gammai}\bfg\cdot\bfn\; \rmd l$.
\end{lemma}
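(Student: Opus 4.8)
The plan is to realize $\bfgs$ as the difference of an explicit non--solenoidal extension $\bfw$ of $\bfg$ and a Bogovskii correction $\bfw_B$ that removes $\div\bfw$, where the crucial point is to solve the divergence equation only on a subdomain separated from $\Gammao$, so that $\bfw_B$ vanishes near $\Gammao$ and property~(b) survives. Every object below is linear in $\bfg$, whence~(a). Set $\Phi=-\int_{\Gammai}\bfg\cdot\bfn\,\rmd l$, so $|\Phi|\le c\,\|\bfg\|_{1-1/r,r;\,\Gammai}$ by the trace embedding. The underlying compatibility is that of fluxes: since $\bfn=-\bfe_1$ on $\Gammai$ and $\bfn=\bfe_1$ on $\Gammao$, while a $\tau$--periodic field has opposite fluxes through $\Gammam$ and $\Gammap$, the constant field $(\Phi/\tau)\bfe_1$ transports exactly the flux $\Phi$ through $\Gammao$, matching the inflow through $\Gammai$; this is what will force $\int_{\Omega}\div\bfw=0$.

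First I would fix $\delta>0$ so small that $\overline{P}\subset\{x_1<d-2\delta\}$, and $\epsilon>0$ smaller than the distance of $\Gammai$ to $\overline{P}$ and than $d-2\delta$, and construct $\bfw:=\bfw_0+\bfw_{\Phi}$ as follows. Let $\bfw_0:=\phi(x_1)\,E\bfg$, where $E\bfg$ is a bounded $\tau$--periodic extension of $\bfg$ from the line $\gammai$ into $\{x_1>0\}$ (trace theorem) and $\phi=\phi(x_1)$ is a cut--off equal to $1$ near $x_1=0$ and vanishing for $x_1\ge\epsilon$; then $\bfw_0=\bfg$ on $\Gammai$, $\bfw_0=\bfzero$ on $\Gammaw$ and near $\Gammao$, $\bfw_0$ is $\tau$--periodic, and $\|\bfw_0\|_{1,r}\le c\,\|\bfg\|_{1-1/r,r;\,\Gammai}$. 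Let $\bfw_{\Phi}:=\zeta(x_1)\,(\Phi/\tau)\,\bfe_1$, where $\zeta=\zeta(x_1)$ is smooth, $\zeta\equiv0$ for $x_1\le d-2\delta$ and $\zeta\equiv1$ for $x_1\ge d-\delta$; then $\bfw_{\Phi}=\bfzero$ on $\Gammai\cup\Gammaw$ (as $\overline{P}$ lies in $\{x_1<d-2\delta\}$), $\bfw_{\Phi}=(\Phi/\tau)\bfe_1$ near $\Gammao$, $\bfw_{\Phi}$ is $\tau$--periodic, and $\|\bfw_{\Phi}\|_{1,r}\le c\,|\Phi|$. The periodicity of $\bfg$ makes the trace values of $\bfw$ agree at the corners $\Am,\Ap$ (they are trivial at $\Bm,\Bp$), so $\bfw\in\bfW^{1,r}_{\rm per}(\Omega)$, $\bfw=\bfg$ on $\Gammai$, $\bfw=\bfzero$ on $\Gammaw$, and $\bfw=(\Phi/\tau)\bfe_1$ for $x_1\ge d-\delta$.

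Consequently $\int_{\Omega}\div\bfw\,\rmd\bfx=\int_{\partial\Omega}\bfw\cdot\bfn\,\rmd l=-\Phi+\Phi=0$, and $\div\bfw=\div\bfw_0+(\Phi/\tau)\,\zeta'(x_1)$ is supported in $\{x_1\le d-\delta\}$. Putting $\Omega_{\delta}:=\Omega\cap\{x_1<d-\tfrac{\delta}{2}\}$, a bounded Lipschitz domain, and noting $\int_{\Omega_{\delta}}\div\bfw=\int_{\Omega}\div\bfw=0$, the Bogovskii operator furnishes $\bfw_B\in\bfW^{1,r}_0(\Omega_{\delta})$ with $\div\bfw_B=\div\bfw$ in $\Omega_{\delta}$ and $\|\bfw_B\|_{1,r}\le c\,\|\div\bfw\|_r\le c\,\|\bfg\|_{1-1/r,r;\,\Gammai}$. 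Since $\bfw_B$ vanishes on $\partial\Omega_{\delta}$, its extension by $\bfzero$ to $\{x_1\ge d-\tfrac{\delta}{2}\}$ stays in $\bfW^{1,r}(\Omega)$ and is $\bfzero$ on all of $\partial\Omega$ and on the neighbourhood $\{x_1>d-\tfrac{\delta}{2}\}$ of $\Gammao$. I then set $\bfgs:=\bfw-\bfw_B$. It is divergence--free (by cancellation in $\Omega_{\delta}$, and because both terms are solenoidal where $x_1\ge d-\tfrac{\delta}{2}$), equals $\bfg$ on $\Gammai$ and $\bfzero$ on $\Gammaw$, is $\tau$--periodic (as $\bfw_B=\bfzero$ on $\Gammam\cup\Gammap$), coincides with $(\Phi/\tau)\bfe_1$ on $\{x_1>d-\tfrac{\delta}{2}\}$, and satisfies $\|\bfgs\|_{1,r}\le c\,\|\bfg\|_{1-1/r,r;\,\Gammai}$. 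This gives~(a) and~(b).

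The main obstacle is precisely the compatibility of property~(b) with solenoidality: a global Bogovskii correction would only guarantee a vanishing trace of $\bfw_B$ on $\Gammao$, thereby destroying the pointwise identity $\bfgs=(\Phi/\tau)\bfe_1$ in a full neighbourhood of $\Gammao$. The resolution is that $\bfw$ is already solenoidal near $\Gammao$, so that $\div\bfw$ is supported away from $\Gammao$ and the divergence equation may be solved on the truncated domain $\Omega_{\delta}$ and the solution extended by zero. A secondary, routine, point is the corner compatibility of the periodic trace extension at $\Am,\Ap$, where $\Gammai$ meets $\Gammam,\Gammap$; here the hypothesis $\bfg\in\bfW^{1-1/r,r}_{\rm per}(\Gammai)$ is exactly what guarantees the needed matching of the boundary values.
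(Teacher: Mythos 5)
Your construction is correct, and it is worth noting that the paper itself offers no proof of Lemma~\ref{L3}: it only remarks that the statement is a slight modification of Lemma~2 of \cite{TNe5} and ``can be proven by means of the same arguments,'' so the only comparison available is with the toolkit this paper deploys elsewhere. Your argument uses exactly that toolkit: the Bogovskii-type solution of $\div\bfw_B=\div\bfw$ with zero boundary values and the bound $\|\bfw_B\|_{1,r}\le c\,\|\div\bfw\|_r$ is \cite[Theorem III.3.3]{Ga}, and the half-plane trace extension arranged to be $\tau$--periodic in $x_2$ is \cite[Theorem II.4.4]{Ga}; both are precisely the devices used in the paper's proof of Lemma~\ref{L6}. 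The essential idea that makes your proof work --- and which you correctly identify as the crux --- is to solve the divergence equation only on a truncated domain whose closure avoids $\Gammao$ and to extend the corrector by zero, so that the pointwise identity $\bfgs=(\Phi/\tau)\bfe_1$ near $\Gammao$ survives; a global correction would indeed only give a vanishing trace on $\Gammao$, which is not enough for property~b). The flux bookkeeping ($-\Phi$ through $\Gammai$, $+\Phi$ through $\Gammao$, cancellation on $\Gammam\cup\Gammap$ by periodicity and opposite normals) is right, as is the observation that $\bfw_B\in\bfW^{1,r}_0(\Omega_\delta)$ forces the extended field to retain periodic traces. Two routine points should be patched before the argument is airtight: (i) the set $\Omega\cap\{x_1<d-\delta/2\}$ need not be Lipschitz if the vertical cut meets $\Gammam$ or $\Gammap$ tangentially; this is harmless --- replace it by any Lipschitz subdomain squeezed between $\Omega\cap\{x_1<d-\delta\}$ and $\Omega\cap\{x_1<d-\delta/4\}$, which changes nothing else; (ii) the existence of a $\tau$--periodic extension operator $E$ should be justified, e.g.\ by first extending $\bfg$ $\tau$--periodically along $\gammai$ (this is exactly where the hypothesis $\bfg\in\bfW^{1-1/r,r}_{\rm per}(\Gammai)$ enters, giving local $W^{1-1/r,r}$ regularity across $\Am$, $\Ap$) and then applying a convolution-type extension that commutes with translations in $x_2$, as the paper does for the functions $\psi_i$ in Lemma~\ref{L6}. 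With these remarks your proof is complete and self-contained, which is more than the paper provides for this statement.
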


\noindent
Lemma \ref{L3} is a slight modification of Lemma 2 in \cite{TNe5}
in the sense that the function $\bfg$ is supposed to be in
$\bfW^{1-1/r,r}_{\rm per}(\Gammai)$ instead of
$\bfW^{s,r}(\Gammai)$ (for $s>1/r$ if $1<r\leq 2$ and $s=1-1/r$ if
$r>2$), as in \cite{TNe5}. It can be proven by means of the same
arguments as Lemma 2 in \cite{TNe5}.

Define $\bfG\in\Vsd{r}$ by the formula
\begin{equation}
\blangle\bfG,\bfw\brangle_{(\vsd{r},\vs{r'})}\ :=\
\int_{\Omega}\nabla\bfg_*:\nabla\bfw\; \rmd\bfx. \label{2.4}
\end{equation}
The norm of $\bfG$ in $\Vsd{r}$ satisfies $\|\bfG\|_{\vsd{r}}\leq
c\, \|\nabla\bfgs\|_r$. Finally, the next Lemma \ref{L4} follows
from \cite[Theorem 2]{TNe5}.

\begin{lemma} \label{L4}
Let the elements $\bfF$ and $\bfG$ of $\Vsd{r}$ be defined by
formulas (\ref{2.3}) and (\ref{2.4}), respectively. Then there
exists a unique solution $\bfv\in\Vs{r}$ of the equation
$\nu\cA_r\bfv=\bfF+\nu\br\bfG$. Moreover, there exists an
associated pressure $p\in L^r(\Omega)$ such that
$\bfu:=\bfg_*+\bfv$ and $p$ satisfy the equation
\begin{equation}
-\nu\Delta\bfu+\nabla p\ =\ \div\bbF \label{2.5}
\end{equation}
in the sense of distributions in $\Omega$,
\begin{equation}
(-\nu\br\nabla\bfu-p\br\bbI-\bbF)\cdot\bfn\ =\ \bfzero
\label{2.6}
\end{equation}
holds as an equality in $\bfWp^{-1/r,r}(\Gammao)$ and
\phantom{$\cn02$}
\begin{equation}
\|\bfu\|_{1,r}+\|p\|_r\ \leq\ \cc02\, \bigl(
\|\bbF\|_r+\|\nabla\bfgs\|_r \bigr), \label{2.7}
\end{equation}
where $\cc02=\cc02(\Omega,\nu)$.
\end{lemma}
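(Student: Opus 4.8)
The plan is to obtain the velocity $\bfv$ directly from the invertibility asserted in Lemma \ref{L1}, and then to recover the pressure by a De~Rham--type argument; the estimate (\ref{2.7}) is assembled from the norm bounds recorded after (\ref{2.3}) and (\ref{2.4}) together with Lemma \ref{L3}a.

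First I would construct $\bfv$. By Lemma \ref{L1} the operator $\cA_r\colon\Vs{r}\to\Vsd{r}$ is a bounded bijection with $R(\cA_r)=\Vsd{r}$, so by the bounded--inverse (open mapping) theorem $\cA_r^{-1}$ is bounded. Since the datum $\tfrac1\nu\bfF+\bfG$ lies in $\Vsd{r}$, the element $\bfv:=\cA_r^{-1}\bigl(\tfrac1\nu\bfF+\bfG\bigr)$ is the unique solution of $\nu\cA_r\bfv=\bfF+\nu\bfG$ in $\Vs{r}$, and boundedness of $\cA_r^{-1}$ gives $\|\nabla\bfv\|_r\le c(\nu)\bigl(\|\bfF\|_{\vsd{r}}+\|\bfG\|_{\vsd{r}}\bigr)$. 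By the inequalities noted after (\ref{2.3}) and (\ref{2.4}) this becomes $\|\nabla\bfv\|_r\le c\bigl(\|\bbF\|_r+\|\nabla\bfgs\|_r\bigr)$, and since the $\Vs{r}$--norm is equivalent to $\|\nabla\cdot\|_r$, the full $\|\bfv\|_{1,r}$ is controlled as well. Setting $\bfu:=\bfgs+\bfv$ and absorbing $\bfgs$ via Lemma \ref{L3}a yields the velocity part of (\ref{2.7}). Rewriting the equation for $\bfv$ in terms of $\bfu$ (using that, by Lemma \ref{L3}b, $\bfgs$ is constant and hence has vanishing normal derivative near $\Gammao$, so its boundary contribution drops out) shows that the residual functional $R(\bfw):=\nu(\nabla\bfu,\nabla\bfw)+\int_\Omega\bbF:\nabla\bfw\,\rmd\bfx$ vanishes for every $\bfw\in\Vs{r'}$; i.e. $\bfu$ satisfies the weak momentum balance against all divergence--free, periodic fields vanishing on $\Gammai\cup\Gammaw$.

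The main work is then to produce the pressure. The functional $R$ is bounded on the full space $\{\bfw\in\bfWp^{1,r'}(\Omega):\bfw|_{\Gammai\cup\Gammaw}=\bfzero\}$ and annihilates its divergence--free subspace $\Vs{r'}$ by the previous step. A De~Rham / closed--range argument for the divergence operator --- equivalently an inf--sup (LBB) inequality for $\div$ on this class of $\bfW^{1,r}$ fields subject to periodicity on $\Gammam,\Gammap$, the Dirichlet condition on $\Gammai\cup\Gammaw$, and the free condition on $\Gammao$ --- then yields a unique $p\in L^r(\Omega)$ with $R(\bfw)=\int_\Omega p\,\div\bfw\,\rmd\bfx$ for all admissible $\bfw$, together with $\|p\|_r\le c\,(\|\bbF\|_r+\|\nabla\bfgs\|_r)$. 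Testing with $\bfw\in\cCn$ gives (\ref{2.5}) in the sense of distributions, while collecting the remaining boundary contribution on $\Gammao$ and reading it as the normal trace of the tensor $-\nu\br\nabla\bfu-p\br\bbI-\bbF$ in $\bfWp^{-1/r,r}(\Gammao)$ gives (\ref{2.6}). Adding the two estimates completes (\ref{2.7}).

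The two pressure steps are exactly where the difficulty lies, and both are supplied by \cite[Theorem 2]{TNe5}, on which I would rely. The inf--sup condition for $\div$ in the $L^r$--setting has to be established on a non--smooth (corner) domain and for the genuinely mixed boundary conditions, where Dirichlet and periodicity/free conditions meet at the vertices $\Am,\Ap,\Bm,\Bp$; this is the delicate geometric obstruction. A second subtle point is giving rigorous meaning to (\ref{2.6}): with only $\bfu\in\bfW^{1,r}(\Omega)$ and $p\in L^r(\Omega)$ the separate boundary traces need not exist pointwise, so the ``do nothing'' condition must be interpreted as an equality of the normal component of the (distributionally divergence--controlled) tensor field in $\bfWp^{-1/r,r}(\Gammao)$, which is precisely what makes the weak outflow condition well posed.
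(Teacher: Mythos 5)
The paper gives no self-contained proof of Lemma \ref{L4}: it is stated as a direct consequence of \cite[Theorem 2]{TNe5}, which is exactly the result you invoke for the two steps you rightly single out as the hard ones (the inf--sup/De~Rham construction of $p\in L^r(\Omega)$ under the mixed periodic/Dirichlet/outflow conditions, and the reading of (\ref{2.6}) as an equality in $\bfWp^{-1/r,r}(\Gammao)$). Your additional scaffolding --- producing $\bfv$ from Lemma \ref{L1} via the open mapping theorem and assembling (\ref{2.7}) from the bounds noted after (\ref{2.3}) and (\ref{2.4}) --- is sound, so your proposal follows essentially the same approach as the paper.
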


\section{The strong Stokes problem} \label{S3}

\begin{lemma} \label{L6}
Let $\bff\in\bfL^r(\Omega)$ and $\bfh\in\bfW^{1-1/r,r}_{\rm
per}(\Gammao)$ be given. Then there exists $\bbF\in W^{1,r}_{\rm
per} (\Omega)^{2\times 2}$, such that $\div\bbF=\bff$ a.e.~in
$\Omega$, $\bbF\cdot\bfn=\bfh$ a.e.~on $\Gammao$ in the sense of
traces and
\begin{equation}
\|\bbF\|_{1,r}\ \leq\ c\, \bigl( \|\bff\|_r+\|\bfh\|_{1-1/r,r;\,
\Gammao} \bigr), \label{3.1}
\end{equation}
where $c=c(\Omega,r)$.
\end{lemma}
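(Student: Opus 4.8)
The plan is to reduce the tensor problem to two independent scalar divergence problems and to solve each by lifting the boundary datum on $\Gammao$ and then inverting the divergence with a right inverse that is bounded from $\bfL^r$ into $\bfW^{1,r}$, respects the $\tau$--periodicity, and vanishes on $\Gammao$. Since $\bfn=\bfe_1$ on $\Gammao$, the condition $\bbF\cdot\bfn=\bfh$ prescribes only the first column of $\bbF$, namely $(F_{11},F_{21})|_{\Gammao}=\bfh$, while $\div\bbF=\bff$ decouples row by row: writing $\bfphi_i=(F_{i1},F_{i2})$ one needs $\div\bfphi_i=f_i$ in $\Omega$ and $F_{i1}=h_i$ on $\Gammao$ for $i=1,2$. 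Hence it suffices to treat the scalar problem: given $f\in L^r(\Omega)$ and $h\in W^{1-1/r,r}_{\rm per}(\Gammao)$, find $\bfphi=(a,b)\in\bfW^{1,r}_{\rm per}(\Omega)$ with $\div\bfphi=f$, $a=h$ on $\Gammao$, and $\|\bfphi\|_{1,r}\le c\,(\|f\|_r+\|h\|_{1-1/r,r;\,\Gammao})$. Note there is no compatibility constraint on the mean of $f$, because the flux may escape through the parts $\Gammai\cup\Gammaw$ on which $\bfphi$ is left free.

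First I would lift the boundary datum. Extending $h$ periodically to $\gammao$ and applying a bounded trace lifting in a periodic collar $\{d-\delta<x_1<d\}$ of $\gammao$ (which, for small $\delta$, avoids the profile), followed by a cut--off $\chi(x_1)$ depending on $x_1$ only, produces $\eta\in W^{1,r}_{\rm per}(\Omega)$ with $\eta=h$ on $\Gammao$ and $\|\eta\|_{1,r}\le c\,\|h\|_{1-1/r,r;\,\Gammao}$; the $x_1$--dependence of the cut--off preserves periodicity and the trace, so no incompatibility arises at the corners $\Bm,\Bp$. Setting $\bfphi^{(0)}=(\eta,0)$ gives $a^{(0)}=h$ on $\Gammao$ and $\div\bfphi^{(0)}=\partial_1\eta=:f_0\in L^r(\Omega)$. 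It then remains to find $\bfpsi=(\alpha,\beta)\in\bfW^{1,r}_{\rm per}(\Omega)$ with $\div\bfpsi=f-f_0=:g$ and $\alpha=0$ on $\Gammao$, after which $\bfphi:=\bfphi^{(0)}+\bfpsi$ solves the scalar problem.

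The core step is this last divergence problem, with a homogeneous condition only on $\Gammao$, a free condition on $\Gammai\cup\Gammaw$, and periodicity on $\Gammam,\Gammap$, solved in $\bfW^{1,r}$ with no mean constraint. I would enlarge $\Omega$ across $\Gammai$ by a rectangle $R$ of height $\tau$, forming a $\tau$--periodic Lipschitz cell $\Omega^{*}=\Int(\overline{\Omega}\cup\overline{R})$ whose genuine (non--periodic) boundary is the new left edge $\Gamma_L$, the outflow $\Gammao$ and the profile $\Gammaw$. Extend $g$ to $\bar g\in L^r(\Omega^{*})$ by a constant on $R$ chosen so that $\int_{\Omega^{*}}\bar g=0$; by H\"older this constant, and hence $\|\bar g\|_r$, is controlled by $\|g\|_r$. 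Viewing $\Omega^{*}$ as a bounded Lipschitz domain on the cylinder $\R\times(\R/\tau\Z)$ collapses the periodic curves into the interior, so the classical Bogovskii right inverse of the divergence applies there and yields $\bfPsi$ with $\div\bfPsi=\bar g$, zero trace on all of $\partial\Omega^{*}$ (in particular on $\Gammao$), and $\|\bfPsi\|_{1,r}\le c\,\|\bar g\|_r$. Unfolding, $\bfPsi$ is $\tau$--periodic on the strip; its restriction $\bfpsi:=\bfPsi|_{\Omega}$ then satisfies $\div\bfpsi=g$, $\bfpsi\in\bfW^{1,r}_{\rm per}(\Omega)$ (matching traces on $\Gammam,\Gammap$ by periodicity), $\alpha=0$ on $\Gammao$, and $\|\bfpsi\|_{1,r}\le c\,\|g\|_r$.

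Collecting the estimates, $\|g\|_r\le\|f\|_r+\|\partial_1\eta\|_r\le\|f\|_r+c\,\|h\|_{1-1/r,r;\,\Gammao}$, so $\bfphi=\bfphi^{(0)}+\bfpsi$ obeys the required bound; applying this to each row $i$ with $f=f_i$, $h=h_i$ assembles $\bbF$ and yields (\ref{3.1}). I expect the main obstacle to be the divergence step: producing a right inverse of the divergence that is simultaneously $\bfW^{1,r}$--bounded, $\tau$--periodic, and vanishing on $\Gammao$ with no compatibility condition on $g$. The enlargement across $\Gammai$ is what removes the mean constraint (the excess flux is absorbed in $R$ and ultimately leaves through $\Gamma_L$), while transplanting the construction to the cylinder is what reconciles Bogovskii's full zero boundary trace with the periodicity on $\Gammam,\Gammap$; one must also check that the collar trace lifting is compatible with periodicity at $\Bm,\Bp$, which holds precisely because $h$ belongs to the periodic trace space $W^{1-1/r,r}_{\rm per}(\Gammao)$.
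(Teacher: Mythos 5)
Your proposal is correct, but it takes a genuinely different route from the paper's proof. The paper builds $\bbF$ as $\bbF_0-\bfr d\,\zeta(x_1)\otimes\bfe_1+\bbH_1+\bbH_0$: the datum $\bfh$ is split into its mean $\overline{\bfh}$ (handled by a constant tensor $\bbH_1$) and a zero--mean part $\bfh_0$ handled by stream functions, $\bbH_{0i}=\nabla^{\perp}\psi_i$ with $\psi_i(d,x_2)=\int_0^{x_2}h_{0i}(d,\vartheta)\,\rmd\vartheta$ -- periodic precisely because $\bfh_0$ has zero mean -- so that every $\bfh$--correction is divergence--free; Bogovskii's operator is then applied only to $\bff$, after zero--extension to $\Omegah=\Omega\cup P$ and after restoring the zero--mean compatibility by the explicit term $\bfr d\,\zeta'(x_1)=\div[\bfr d\,\zeta(x_1)\otimes\bfe_1]$ supported near $\Gammai$; since $\bbF_0\in W^{1,r}_0(\Omegah)^{2\times 2}$ vanishes on \emph{all} of $\partial\Omegah$, its periodicity and its vanishing normal trace on $\Gammao$ come for free, so the standard planar result \cite[Theorem III.3.3]{Ga} suffices verbatim. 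You instead lift $\bfh$ directly (not divergence--free, paying with the extra source $\partial_1\eta$), and you push the whole divergence equation into a single Bogovskii--type problem with homogeneous datum only on $\Gammao$, no compatibility condition, and periodicity on $\Gammam,\Gammap$. Your rectangle attached across $\Gammai$ plays exactly the same role as the paper's $\zeta$--term (both let the excess flux exit through the free part $\Gammai$ of the boundary), but your periodicity is obtained by solving on the flat cylinder $\R\times(\R/\tau\Z)$. That is the one place where your argument goes beyond the cited literature: Galdi's theorem is stated for bounded domains in $\R^n$, so ``classical Bogovskii applies there'' needs a short justification -- e.g.\ decompose the cylinder domain into finitely many subdomains, each star--shaped with respect to a ball and contained in a fundamental chart, and invoke Galdi's localization of the datum, which is a purely local argument; this is true but folklore rather than quotable. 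The paper's arrangement (full zero trace on $\partial\Omegah$ plus divergence--free liftings of $\bfh$) is designed precisely to avoid needing such a periodic Bogovskii operator; what your arrangement buys in exchange is uniformity -- no mean/oscillation splitting, no stream functions, and a single right inverse of the divergence doing all the work -- and a construction that generalizes more readily to data prescribed on other ``free'' boundary pieces.
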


\begin{proof}
Denote $\Omegah:=\Omega\cup P$. Then $|\Omegah|=\tau d$. Define
$\bff:=\bfzero$ in $P$. Thus, $\bff\in\bfL^r(\Omegah)$. Put
$\bfr:=|\Omegah|^{-1}\, \int_{\Omegah}\bff\; \rmd\bfx$.

Let $\zeta=\zeta(x_1)$ be a smooth real function in $[0,d]$, such
that $\zeta(0)=1$ and $\zeta$ be supported in $[0,\delta]$, where
$\delta>0$ is so small that the profile $P$ (see Fig.~1) lies in
the stripe $\delta<x_1<d-\delta$. Since
$\int_{\Omegah}\zeta'(x_1)\, \rmd\bfx=-\tau$, we have
$\int_{\Omegah}(\bff+\bfr d\, \zeta'(x_1))\; \rmd\bfx=\bfzero$.
Thus, due to \cite[Theorem III.3.3]{Ga}, there exists $\bbF_0\in
W^{1,r}_0(\Omegah)^{2\times 2}$, such that $\div\bbF_0=\bff+\bfr
d\, \zeta'$ a.e.~in $\Omegah$ and
\begin{equation}
\|\bbF_0\|_{1,r;\, \Omegah}\ \leq\ c\, \|\bff+\bfr d\,
\zeta'\|_{r;\, \Omegah}\ \leq\ c\, \|\bff\|_r,
\label{3.2}
\end{equation}
where $c=c(\tau,d,\zeta)$. Since $\bfr d\, \zeta'(x_1)=\div[\bfr
d\, \zeta(x_1)\otimes \bfe_1]$, $\bbF_0$ also satisfies
$\div[\bbF_0-\bfr d\, \zeta(x_1) \\ \otimes\bfe_1]=\bff$ a.e.~in
$\Omega$.

We will further construct the tensor function $\bbF$ in the form
\begin{equation}
\bbF\ =\ \bbF_0-\bfr d\, \zeta(x_1)\otimes\bfe_1+\bbH_1+\bbH_0,
\label{3.3}
\end{equation}
where $\bbH_1$ and $\bbH_0$ are defined as follows:

\vspace{4pt}
{\it 1) Function $\bbH_1$.} \ Put
$\overline{\bfh}\equiv(\overline{h}_1,\overline{h}_2):=
\tau^{-1}\int_{b_0}^{b_1}\bfh(d,\vartheta)\, \rmd\vartheta$. Let
$\bbH_1$ be the $2\times 2$ matrix, whose 1st column is $(0,0)^T$
and the second column is $(\overline{h}_1,\overline{h}_2)^T$. Then
$\div\bbH_1=\bfzero$ and $\bbH_1\cdot\bfe_1=\overline{\bfh}$ on
$\Gammao$.

\vspace{4pt}
{\it 2) Function $\bbH_0$.} \ Put
$\bfh_0\equiv(h_{01},h_{02}):=\bfh-\overline{\bfh}$. Then
$\int_{\Gammao}\bfh_0\; \rmd l=0$.

The tensor function $\bbH_0$ is constructed so that its $i$--th
row (for $i=1,2$) has the form $\bbH_{0i}= \nabla^{\perp} \psi_i$,
where $\nabla^{\perp}= (\partial_2,-\partial_1)$, $\psi_i\in
W^{2,r}(\Omega)$ and
\begin{align}
& \nabla^{\perp}\psi_i\cdot\bfn\ =\ h_{0i} && \mbox{a.e.~on}\
\Gammao, \label{3.4} \\
& \nabla^{\perp}\psi_i(x_1,x_2+\tau)\ =\ \nabla^{\perp}
\psi_i(x_1,x_2) && \mbox{for a.a.}\ (x_1,x_2)\in\Gammam.
\label{3.5}
\end{align}
As $\bfn=\bfe_1$ on $\Gammao$, condition (\ref{3.4}) means that
\begin{equation}
\partial_2\psi_i\ =\ h_{0i} \qquad \mbox{a.e.~on}\ \Gammao.
\label{3.6}
\end{equation}
Condition (\ref{3.5}) can be rewritten in the form
\begin{equation}
\nabla\psi_i(x_1,x_2+\tau)\ =\ \nabla\psi_i(x_1,x_2) \qquad
\mbox{for a.a.}\ (x_1,x_2)\in\Gammam. \label{3.7}
\end{equation}
As the function $h_{0i}$ lies in $W^{1-1/r,r}_{\rm per}(\Gammao)$,
it can be extended as a $\tau$--periodic function to the whole
straight line $\gammao$, so that the extended function (which we
again denote by $h_{0i}$) is in $W^{1-1/r,r}_{loc}(\gammao)$.
Moreover, the integral of $h_{0i}$ on any line segment on
$\gammao$ of length $\tau$ is zero. Put
$\psi_i(d,x_2):=\int_0^{x_2} h_{0i}(d,\vartheta)\; \rmd\vartheta$.
Function $\psi_i(d,\, .\, )$ is in $W^{2-1/r,r}_{loc}(\gammao)$,
satisfies condition (\ref{3.6}) and is $\tau$--periodic. Applying
\cite[Theorem II.4.4]{Ga}, we deduce that there exists an
extension of $\psi_i$ from $\gammao$ to the half-plane $\R^2_{d-}$
(which we again denote by $\psi_i$), such that $\psi_i\in
W^{2,r}_{loc}(\R^2_{d-})$, $\psi_i$ is supported in the stripe
$d-\delta\leq x_1\leq d$ and the restriction of $\psi_i$ to
$\Omega$ satisfies
\begin{equation}
\|\psi_i\|_{2,r}\ \leq\ c\, \|\psi_i\|_{2-1/r,r;\, \Gammao}\ \leq\
c\, \|h_{0i}\|_{1-1/r,r;\, \Gammao}. \label{3.8}
\end{equation}
As $\psi_i(d,x_2)$ is $\tau$--periodic in variable $x_2$, the
extension can also be constructed so that it is $\tau$--periodic
in variable $x_2$. Thus, the functions $\psi_i$ (for $i=1,2$)
satisfy (\ref{3.7}). The identities
$\bbH_{0i}=\nabla^{\perp}\psi_i$ ($i=1,2$) guarantee that
$\div\bbH_{0i}=0$ ($i=1,2$), which yields $\div\bbH_0=\bfzero$.
Obviously, $\bbH_0\in W^{1,r}_{\rm per}(\Omega)^{2\times 2}$,
$\bbH_0\cdot\bfn=\bfh_0$ on $\Gammao$ and
\begin{equation}
\|\bbH_0\|_{1,r}\ \leq\ c\, \|\bfh_0\|_{1-1/r,r;\, \Gammao},
\label{3.10}
\end{equation}
where $c=c(\Omega)$.

Now, it follows from the construction of $\bbF_0$, $\bbH_1$,
$\bbH_0$ and formula (\ref{3.3}) that $\bbF\in W^{1,r}_{\rm
per}(\Omega)$, $\div\bbF=\bff$ a.e.~in $\Omega$ and
$\bbF\cdot\bfn=\bfh$ on $\Gammao$. The estimate (\ref{3.1})
essentially follows from (\ref{3.3}) and (\ref{3.10}).
\end{proof}

\begin{remark} \label{R2} \rm
Suppose that function $\bfg$ in the assumptions of Lemma \ref{L3}
is in $\bfW^{2-1/r,r}_{\rm per}(\Gammai)$. Then the extension
$\bfg_*$ of $\bfg$ from $\Gammai$ to $\Omega$ can be constructed
so, that, in addition to the properties a), b), named in Lemma
\ref{L3}, it is in $\bfW^{2,r}(\Omega)$ and

\vspace{4pt}
c) \ $\|\bfg_*\|_{2,r}\leq c\, \|\bfg\|_{2-1/r,r;\, \Gammai}$, \
where $c$ is independent of $\bfg$,

\vspace{4pt}
d) \ $\bfg_*$ satisfies the condition of periodicity (\ref{1.6})
on $\Gammam\cup\Gammap$. \rm

\vspace{4pt} \noindent
The possibility of the construction of $\bfg_*$ with the
properties c) and d) follows, similarly as Lemma \ref{L3}, from an
appropriate modification of the proof of Lemma 2 in \cite{TNe5}.
Using the higher regularity of $\bfg$, one can in principle apply
the same arguments so that one obtains the extension $\bfg_*$ with
all the properties a) -- d).
\end{remark}

\begin{theorem}[on a strong solution of the Stokes problem
(\ref{1.1})--(\ref{1.8})] \label{T2} Let the \\ closed curve
$\Gammaw$ (which is the boundary of the profile) be of the class
$C^2$, $\bff\in\bfL^r(\Omega)$, $\bfh\in\bfW^{1-1/r,r}_{\rm
per}(\Gammao)$, $\bfg\in\bfW^{2-1/r,r}_{\rm per}(\Gammai)$ be
given. Let $\bbF$ and $\bfgs$ be the functions, given by Lemma
\ref{L6}, Lemma \ref{L3} and Remark \ref{R2}. Let the functionals
$\bfF$ and $\bfG$ be defined by formulas (\ref{2.3}) and
(\ref{2.4}), respectively. Then

\begin{list}{}
{\setlength{\topsep 0pt}
\setlength{\itemsep 1pt}
\setlength{\leftmargin 15pt}
\setlength{\rightmargin 0pt}
\setlength{\labelwidth 10pt}}

\item[1)]
the unique solution $\bfv$ of the equation
$\nu\cA_r\bfv=\bfF+\nu\br\bfG$ belongs to
$\Vs{r}\cap\bfW^{2,r}(\Omega)$,

\item[2)]
there exists an associated pressure $p\in W^{1,r}(\Omega)$ so that
the functions $\bfu:=\bfg_*+\bfv$ and $p$ satisfy equations
(\ref{1.1}) (with $\bff=\div\bbF$) and (\ref{1.2}) a.e.~in
$\Omega$,

\item[3)]
$\bfu$, $p$ satisfy boundary conditions (\ref{1.3}), (\ref{1.4})
and (\ref{1.8}) in the sense of traces on $\Gammai$, $\Gammaw$ and
$\Gammao$, respectively,

\item[4)]
$\bfu$, $p$ satisfy the conditions of periodicity
(\ref{1.5})--(\ref{1.7}) in the sense of traces on $\Gammam$ and
$\Gammap$,

\item[5)]
there exists a constant $\cn01=\cc01(\nu,\Omega)$, such that
\begin{equation}
\|\bfu\|_{2,r}+\|\nabla p\|_r\ \leq\ \cc01\, \bigl( \|\bff\|_r+
\|\bfg\|_{2-1/r,r;\, \Gammai}+\|\bfh\|_{1-1/r,r;\, \Gammao}
\bigr). \label{3.11}
\end{equation}
\end{list}
\end{theorem}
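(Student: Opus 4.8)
The plan is to take the weak solution $(\bfu,p)$ produced by Lemma \ref{L4} — which already satisfies (\ref{1.1}) with $\bff=\div\bbF$, the incompressibility (\ref{1.2}) distributionally, the do-nothing condition in the weak form (\ref{2.6}), and the a priori bound (\ref{2.7}) — and to raise its regularity from $\bfW^{1,r}(\Omega)\times L^r(\Omega)$ to $\bfW^{2,r}(\Omega)\times W^{1,r}(\Omega)$ by a localization argument in which the apparent corners are removed by unfolding the periodicity. I would fix a finite cover of $\overline{\Omega}$ and a subordinate partition of unity $\{\phi_j\}$, localize the system (each $\phi_j\bfu$ solves a Stokes problem whose right-hand side is $\phi_j\bff$ plus lower-order commutator terms built from $\nabla\phi_j$, $\nabla^2\phi_j$ acting on $\bfu$ and $p$, and whose divergence $\nabla\phi_j\cdot\bfu$ lies in $W^{1,r}$), and treat the patches by type.

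The decisive point is that the four vertices $\Am,\Ap,\Bm,\Bp$, at which two different boundary conditions seem to meet, are not genuine corners for this problem. Since $\Gammap=\Gammam+\tau\bfe_2$ with both curves of class $C^{\infty}$, and since the inflow and outflow segments lie on the straight lines $\gammai$, $\gammao$ perpendicular to the direction of periodicity, unfolding the periodicity replaces the seam $\Gammam\cup\Gammap$ by an interior region and converts a neighbourhood of each vertex into either a smooth interior patch or a patch bounded only by the flat line $\gammai$ (carrying the Dirichlet condition) or the flat line $\gammao$ (carrying the do-nothing condition). To justify this, I would verify that the $\tau$-periodic extension of $(\bfu,p)$ across the seam is again a weak solution of the Stokes system in a neighbourhood of $\Gammam$ inside the periodic strip $\cO$: the test functions in $\cCs$ are themselves $\tau$-periodic and may have support crossing the seam, so writing out the weak identity for such a test function shows that the extended pair solves the weak Stokes equation with no boundary contribution on $\Gammam\cup\Gammap$, which is exactly the weak form of the natural periodicity condition (\ref{1.6}). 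Hence the seam becomes interior and no corner singularity survives at any of the four vertices.

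With the geometry reduced in this way, the only boundaries that remain are the $C^2$ profile $\Gammaw$ (Dirichlet $\bfu=\bfzero$), the flat line $\gammai$ (Dirichlet $\bfu=\bfg$) and the flat line $\gammao$ (do-nothing). On interior patches and on patches meeting $\Gammaw$ I invoke the classical interior and Dirichlet $L^r$ maximum-regularity estimates for the Stokes system; on patches meeting $\gammai$ I use the same Dirichlet estimate after subtracting the extension $\bfgs\in\bfW^{2,r}(\Omega)$ of Remark \ref{R2} (so that the localized velocity vanishes on the flat boundary and $\bfg\in\bfW^{2-1/r,r}_{\rm per}$ enters only through $\bfgs$); on patches meeting $\gammao$ I use the $L^r$ maximum-regularity estimate for the Stokes system under the Neumann-type do-nothing condition, with the data $\bfh\in\bfW^{1-1/r,r}_{\rm per}$ in the correct trace class. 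Summing the local estimates over $j$, and absorbing the commutator contributions through (\ref{2.7}) into $\|\bff\|_r+\|\bfg\|_{2-1/r,r;\,\Gammai}+\|\bfh\|_{1-1/r,r;\,\Gammao}$, yields $\bfu\in\bfW^{2,r}(\Omega)$, $p\in W^{1,r}(\Omega)$ and the estimate (\ref{3.11}); the pointwise validity of (\ref{1.1})--(\ref{1.8}) and of the periodicity conditions (\ref{1.5})--(\ref{1.7}) then follows from this regularity combined with the already-known weak identities.

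I expect the main obstacle to be the outflow patches near $\gammao$: one must establish the $L^r$ maximum-regularity estimate for the Stokes system under the do-nothing boundary condition on a flat boundary, and confirm that after unfolding this natural condition meshes compatibly with the periodicity so that no singularity is reintroduced at $\Bm$ and $\Bp$. This is precisely the step where the $L^2$ Hilbert-space techniques of \cite{TNe4} must be replaced by genuine $L^r$ arguments.
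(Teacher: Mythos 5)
Your outline reproduces much of the paper's architecture: the localization, the Dirichlet estimates on interior patches and patches meeting $\Gammaw$ or $\Gammai^0$ (the paper's Lemma \ref{L7}, via Temam's Proposition I.2.3), and, in particular, the idea that the four vertices are not genuine corners because the periodic seam can be unfolded --- this is exactly the paper's Lemma \ref{L9}, where $\bfv$, $\bbF$, $\bfgs$ are shifted by $\delta\br\bfe_2$ to a domain $\Omega^{\delta}$ in which $\Gammap$ is interior and the vertices become interior points of the shifted inflow/outflow segments. However, there is a genuine gap at what you yourself call the ``main obstacle'': on patches meeting $\gammao$ you simply \emph{invoke} an ``$L^r$ maximum-regularity estimate for the Stokes system under the Neumann-type do-nothing condition on a flat boundary.'' No such off-the-shelf result is available --- the paper points out that the only prior regularity results for this boundary condition are the $W^{2-\beta,2}$ result of \cite{KuBe} and the $L^2$ result of \cite{TNe4} --- and proving it is precisely the mathematical content of the theorem. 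A proof that assumes its own crux is incomplete.

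The paper fills this hole constructively (Lemma \ref{L8}): after a divergence-free extension of $\bfv$ across $\gammao$ (via \cite{KMPT}) and a correction by a Bogovskii-type field $\bfv_*$, the localized pair $(\bfvt,\pt)$ is shown to solve $\nu\cA_r\bfvt=\bfFt$, where $\bbFt$ is produced by Lemma \ref{L6} so that $\div\bbFt=\bfft$ and $\bbFt\cdot\bfn=\bfht$ on $\Gammao$; then \emph{tangential difference quotients} $D_2^{\delta}$ are applied, and the isomorphism property of $\cA_r$ (Lemma \ref{L1}, from \cite{TNe5}) yields $\|\nabla D_2^{\delta}\bfvt\|_r\leq\|\bfFt_{\delta}\|_{\vsd{r}}\leq c\,\|D_2^{\delta}\bbFt\|_r$, uniformly in $\delta$, after which the momentum equation and $\div\bfvt=0$ recover the remaining second derivatives and $\partial_1\pt$ algebraically. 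Note also that this argument is bootstrapped, not self-contained in $L^r$: for $r\geq 2$ the paper first imports the $L^2$ strong solvability of \cite{TNe4} (which already gives your statements 2)--4) and the a.e.\ validity of the equations needed to manipulate the localized problem), and for $1<r<2$ it approximates the data by $L^2$ data and re-runs Lemmas \ref{L7}--\ref{L9} to get an $r$-uniform bound (\ref{3.29}) before passing to the limit; your proposal, which starts only from the $\bfW^{1,r}$ weak solution of Lemma \ref{L4}, does not explain how the localized problem near $\gammao$ can even be written with well-defined traces of $\nu\,\partial\bfvt/\partial\bfn-\pt\br\bfn$ at that low regularity. Without a concrete substitute for the difference-quotient step, the proposal does not prove the theorem.
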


\begin{remark} \label{R3} \rm
The conclusions $\bfu\in\bfW^{2,r}(\Omega)$ and $p\in
W^{1,r}(\Omega)$, following from Theorem \ref{T2}, together with
inequality (\ref{3.11}), represent the maximum regularity property
of the Stokes problem (\ref{1.1})--(\ref{1.8}).
\end{remark}

\noindent
{\bf Proof of Theorem \ref{T2}.} \ The existence and uniqueness of
the solution $\bfv\in\Vs{r}$ of the equation
$\nu\cA_r\bfv=\bfF+\nu\br\bfG$ and an associated pressure $p\in
L^r(\Omega)$ are guaranteed by Lemma \ref{L4}. It also follows
from Lemma 4 that the functions $\bfu:=\bfgs+\bfv$ and $p$ satisfy
(\ref{2.5})--(\ref{2.7}).

\vspace{4pt} \noindent
{\bf Assume at first that $r\geq 2$.} \ Then, due to \cite[Theorem
2]{TNe4}, $\bfv\in\bfW^{2,2}_{\rm per}(\Omega)$ and $p\in
W^{1,2}_{\rm per}$, $\bfu$ and $p$ satisfy equations (\ref{2.5}),
(\ref{1.2}) a.e.~in $\Omega$ and the boundary conditions
(\ref{1.3}), (\ref{1.4}), (\ref{1.8}) in the sense of traces on
$\Gammai$, $\Gammaw$ and $\Gammao$, respectively. Moreover,
\begin{equation}
\|\bfu\|_{2,2}+\|\nabla p\|_2\ \leq\ c\, \bigl(
\|\bff\|_2+\|\bfg\|_{3/2,2;\, \Gammai}+\|\bfh\|_{1/2,2;\, \Gammao}
\bigr). \label{3.12}
\end{equation}
This implies the validity of statements 2) and 3). The validity of
statement 4) follows from the fact that $\bfv\in\bfW^{2,2}_{\rm
per}(\Omega)$ and the extended function $\bfgs$ satisfies the
conditions of periodicity (\ref{1.5}), (\ref{1.6}). Thus, we only
need to prove items 1) and 5).

We split the proof to three lemmas, where we successively show
that $\bfv\in\bfW^{2,r}$ and $p\in W^{1,r}$ in the interior of
$\Omega$ plus the neighbourhood of $\Gammaw$ and the neighborhood
of any closed subset of $\Gammai^0$ (Lemma \ref{L7}), in the
neighborhood of $\Gammao^0$ (Lemma \ref{L8}) and in the
neighborhoods of $\Gammam$ and $\Gammap$ (Lemma \ref{L9}). Lemmas
\ref{L7}--\ref{L9} also provide estimates, which finally imply
(\ref{3.11}).

\begin{lemma} \label{L7}
Let $\Omega'$ be a sub-domain of $\Omega$, such that
$\overline{\Omega'}\subset\Omega\cup\Gammai^0\cup\Gammaw$. Then
$\bfv\in\bfW^{2,r}(\Omega')$, $p\in W^{1,r}(\Omega')$ and
\begin{equation}
\|\bfv\|_{2,r;\, \Omega'}+\|\nabla p\|_{r;\, \Omega'}\ \leq\ c\,
\bigl( \|\div\bbF\|_r+\|\bfg_*\|_{2,r}+\|\bfv\|_{1,r} \bigr),
\label{3.13}
\end{equation}
where $c=c(\nu,\Omega,\Omega')$.
\end{lemma}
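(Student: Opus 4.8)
\noindent
The plan is to deduce the assertion from the classical $L^r$--maximum regularity of the Stokes system with homogeneous Dirichlet data on a smooth domain, applied locally through a cut--off argument; the decisive point is that $\overline{\Omega'}$ avoids the corners $\Am$, $\Ap$, $\Bm$, $\Bp$ and meets $\partial\Omega$ only along the straight segment $\Gammai^0$ and the $C^2$ curve $\Gammaw$. First I would pass from $\bfu$ to $\bfv=\bfu-\bfgs$. Since $\bfgs\in\bfW^{2,r}(\Omega)$ is divergence--free (Remark \ref{R2}) and $\bfv\in\Vs{r}$ vanishes in the sense of traces on $\Gammai\cup\Gammaw$, the pair $(\bfv,p)$ solves
\begin{equation*}
-\nu\Delta\bfv+\nabla p\ =\ \div\bbF+\nu\Delta\bfgs\ =:\ \bff_*,\qquad \div\bfv=0\qquad\mbox{in }\Omega,
\end{equation*}
in the sense of distributions, with $\bff_*\in\bfL^r(\Omega)$ and $\bfv=\bfzero$ on $\Gammai^0\cup\Gammaw$. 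Thus on the boundary portion relevant to $\Omega'$ only the homogeneous Dirichlet condition occurs, and from Lemma \ref{L4} we already know $\bfv\in\bfW^{1,r}(\Omega)$, $p\in L^r(\Omega)$.

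Next I would localize. I choose a subdomain $\Omega''$ with $\overline{\Omega'}\subset\Omega''$ and $\overline{\Omega''}\subset\Omega\cup\Gammai^0\cup\Gammaw$ and a cut--off $\eta\in C^\infty(\overline{\Omega})$ equal to $1$ on $\Omega'$ and supported in $\Omega''$. Because $\supp\eta$ meets $\partial\Omega$ only in the smooth pieces $\Gammai^0$ or $\Gammaw$, a standard localisation encloses $\Omega''$ in a bounded domain $D$ of class $C^2$ whose boundary agrees with $\partial\Omega$ on $\supp\eta\cap\partial\Omega$ and on which $\eta\bfv$ has zero trace. Writing $\bar p$ for the mean value of $p$ over $\Omega''$ and using $\bar p\,\nabla\eta=\nabla(\bar p\,\eta)$, the pair $(\eta\bfv,\,\eta(p-\bar p))$ satisfies in $D$
\begin{equation*}
-\nu\Delta(\eta\bfv)+\nabla\bigl(\eta(p-\bar p)\bigr)\ =\ \eta\bff_*-\nu\bigl(2\nabla\eta\cdot\nabla\bfv+\bfv\,\Delta\eta\bigr)+(p-\bar p)\,\nabla\eta\ =:\ \bfq,\qquad \div(\eta\bfv)=\nabla\eta\cdot\bfv,
\end{equation*}
with $\bfq\in\bfL^r(D)$ and homogeneous Dirichlet data on $\partial D$.

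The divergence must now be restored. The function $\nabla\eta\cdot\bfv$ lies in $W^{1,r}(D)$, vanishes on $\partial D$ and satisfies the compatibility condition $\int_D\nabla\eta\cdot\bfv\,\rmd\bfx=\int_{\partial D}\eta\,\bfv\cdot\bfn\,\rmd l=0$ (because $\bfv=\bfzero$ on $\partial D\cap\partial\Omega$ and $\eta=0$ on the rest of $\partial D$), so by \cite[Theorem III.3.3]{Ga} there is $\bfw\in\bfW^{2,r}(D)\cap\bfW^{1,r}_0(D)$ with $\div\bfw=\nabla\eta\cdot\bfv$ and $\|\bfw\|_{2,r;\,D}\le c\,\|\bfv\|_{1,r}$. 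Then $(\eta\bfv-\bfw,\,\eta(p-\bar p))$ is divergence--free and solves the homogeneous Dirichlet Stokes problem in the $C^2$ domain $D$ with source $\bfq+\nu\Delta\bfw\in\bfL^r(D)$; the classical $L^r$--maximum regularity (e.g.~\cite[Theorem IV.6.1]{Ga}) then bounds $\|\eta\bfv-\bfw\|_{2,r;\,D}+\|\nabla(\eta(p-\bar p))\|_{r;\,D}$ by $c\,\|\bfq\|_{r;\,D}$. To control the remaining pressure contribution in $\bfq$ I would use the Ne\v{c}as inequality, which bounds $\|p-\bar p\|_{r;\,\Omega''}$ by the $W^{-1,r}(\Omega'')$--norm of $\nabla p$; since $\nabla p=\bff_*+\nu\Delta\bfv$, this gives $\|p-\bar p\|_{r;\,\Omega''}\le c(\|\bff_*\|_r+\|\bfv\|_{1,r})$. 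As $\eta\equiv1$ on $\Omega'$ and $\|\bff_*\|_r\le\|\div\bbF\|_r+c\,\|\bfgs\|_{2,r}$, collecting these estimates yields $\bfv\in\bfW^{2,r}(\Omega')$, $p\in W^{1,r}(\Omega')$ and exactly (\ref{3.13}).

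The main obstacle is the localisation itself rather than the regularity input: one must embed the non--smooth domain near $\Omega'$ into a genuine $C^2$ domain $D$ on which the classical theorem is available -- this is possible precisely because $\overline{\Omega'}$ stays away from the four corners and from $\Gammao$, $\Gammam$, $\Gammap$, so that only the flat $\Gammai^0$ and the $C^2$ curve $\Gammaw$ are involved -- and one must cope with the loss of the divergence--free condition under multiplication by $\eta$. The latter is removed by the Bogovski\u{\i}--type corrector $\bfw$ at the harmless cost of the lower--order term $\|\bfv\|_{1,r}$, while the pressure entering the localised source is handled by subtracting its mean and invoking the Ne\v{c}as inequality; the additive constant is immaterial, since (\ref{3.13}) controls only $\nabla p$.
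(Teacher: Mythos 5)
Your proof is correct, and its skeleton --- cut off with $\eta$ near $\Omega'$, enclose the support in a smooth subdomain whose boundary meets $\partial\Omega$ only along the flat segment $\Gammai^0$ and the $C^2$ curve $\Gammaw$, and invoke classical $L^r$ regularity for the Dirichlet--Stokes problem --- is exactly that of the paper's proof of Lemma \ref{L7}. The differences are two technical points. First, the paper does not correct the divergence: it keeps the pair $(\eta\bfv,\eta\br p)$, which solves a Stokes system with divergence data $\nabla\eta\cdot\bfv\in W^{1,r}$, and cites \cite[Proposition I.2.3]{Te}, which admits inhomogeneous divergence; you instead remove the divergence with a Bogovskii field from \cite[Theorem III.3.3]{Ga} and then apply \cite[Theorem IV.6.1]{Ga} to a divergence--free field. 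This costs one extra step but achieves the same effect (and since Galdi's Theorem IV.6.1 also allows divergence data in $W^{1,r}$, your corrector could even be dispensed with). Second, and more substantively, your treatment of the pressure term $(\nabla\eta)\br p$ in the localized right--hand side differs: the paper bounds $\|p\|_r$ by the global estimate (\ref{2.7}), whose right--hand side contains $\|\bbF\|_r$ --- a quantity that is not literally among the terms of (\ref{3.13}) --- whereas you subtract the mean $\bar p$ and control $\|p-\bar p\|_{r;\,\Omega''}$ by the Ne\v{c}as inequality together with the equation $\nabla p=\bff_*+\nu\Delta\bfv$, which yields a bound purely in terms of $\|\div\bbF\|_r+\|\bfgs\|_{2,r}+\|\bfv\|_{1,r}$ and therefore matches (\ref{3.13}) exactly; this is arguably tighter than the paper's own bookkeeping. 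The only points left implicit in your write--up, at the same level of rigor as the paper's, are the explicit construction of the $C^2$ domain $D$ (which may be doubly connected when $\Gammaw\subset\overline{\Omega'}$, a case both regularity theorems cover) and the identification of your weak solution with the strong solution furnished by the cited theorem, which requires the standard uniqueness of weak solutions of the Dirichlet--Stokes problem on $D$.
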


\begin{proof}
Let $\Omega''$ be a smooth sub-domain of $\Omega$ such that
$\Omega'\subset\Omega''\subset\Omega$,
$\overline{\Omega''}\subset\Omega\cup\Gammai^0\cup\Gammaw$ and
$\dist(\partial\Omega''\cap\Omega;\, \partial\Omega'\cap
\Omega)>0$. Let $\eta$ be an infinitely differentiable cut--off
function in $\Omega$ such that $\supp\eta\subset
\overline{\Omega''}$ and $\eta=1$ in $\Omega'$. Put
$\bfvt:=\eta\bfv$ and $\pt:=\eta\br p$. The functions $\bfvt$,
$\pt$ represent a strong solution of the problem
\begin{align}
-\nu\Delta\bfvt+\nabla\pt\ &=\ \bfft && \mbox{in}\ \Omega'',
\label{3.14} \\
\div\bfvt\ &=\ \hht && \mbox{in}\ \Omega'', \label{3.15} \\
\bfvt\ &=\ \bfzero && \mbox{on}\ \partial\Omega'', \label{3.16}
\end{align}
where
\begin{displaymath}
\bfft:=\eta\, \div\bbF-2\nu\br\nabla\eta\cdot\nabla\bfv-\nu\,
(\Delta\eta)\, \bfv-(\nabla\eta)\, p+\nu\br\eta\br\Delta\bfg_*
\qquad \mbox{and} \qquad \hht:=\nabla\eta\cdot\bfv.
\end{displaymath}
As $\div\bbF\in\bfL^r(\Omega)$ and $\bfu\in\bfW^{1,r}(\Omega)$,
$\bfv\in\Vs{r}$ and $p\in L^r(\Omega)$ (satisfying (\ref{2.7}), we
have $\bfft\in\bfL^r(\Omega)$, $\hht\in W^{1,r}(\Omega)$ and
\begin{alignat}{3}
& \|\bfft\|_r\ &&\leq\ c\, \bigl( \|\div\bbF\|_r+
\|\bfg_*\|_{2,r}+\|\bfv\|_{1,r} \bigr), \label{3.17} \\
& \|\hht\|_{1,r}\ &&\leq\ c\, \|\bfv\|_r\ \leq\ c\,
\|\bfv\|_{1,r}, \label{3.18}
\end{alignat}
where $c=c(\nu,\eta)$. Due to \cite[Proposition I.2.3, p.~35]{Te},
$\bfvt\in\bfW^{2,r}(\Omega'')$, $\pt\in W^{1,r}(\Omega''$ and
\begin{displaymath}
\|\bfvt\|_{2,r;\, \Omega''}+\|\nabla\pt\|_{r;\, \Omega''}\ \leq\
c\, \bigl( \|\bfft\|_{r;\, \Omega''}+\|\hht\|_{1,r;\, \Omega''}
\bigr),
\end{displaymath}
where $c=c(\Omega'')$. Consequently, $\bfv\in\bfW^{2,r}(\Omega')$,
$p\in W^{1,r}(\Omega')$ and (\ref{3.13}) holds.
\end{proof}

\vspace{4pt}
Recall that $\Gammao^0$ is the open line segment with the end
points  $\Bm$ and $\Bp$.

\begin{lemma} \label{L8}
Let $\Omega'$ be a sub-domain of $\Omega$, such that
$\overline{\Omega'}\subset\Omega\cup\Gammao^0$. Then
$\bfv\in\bfW^{2,r}(\Omega')$, $p\in W^{1,r}(\Omega')$ and the
inequality (\ref{3.13}) holds.
\end{lemma}

\begin{proof}
Here, we must use a different method than in the proof of Lemma
\ref{L7}. The reason is that we cannot apply Proposition I.2.3
from \cite{Te}, because it concerns the Stokes problem with the
Dirichlet boundary condition, which we do not have on $\Gammao$.

Denote by $\Gammao'$ the intersection of $\overline{\Omega'}$ with
$\Gammao^0$. We may assume, without loss of generality, that
$\Gammao'\not=\emptyset$ and it is a line segment.

Let $\rho_2>\rho_1>0$. Denote $U_1:=\{\bfx\in\R^2;\
\dist(\bfx,\Gammao')<\rho_1\}$ and $U_2:=\{\bfx\in\R^2;\
\dist(\bfx,\Gammao')<\rho_2\}$. Suppose that $\rho_2$ is so small
that $\overline{U_2}\cap\R^2_{d-}\subset\Omega$. (See Fig.~2.)

\vspace{4pt} \noindent
{\it Step 1.} We will construct a divergence--free function
$\bfvt$ in $\bfW^{1,r}_0(U_2)$ that coincides with $\bfv$ in
$U_1\cap\R^2_{d-}$.

\begin{wrapfigure}[15]{r}{70mm}

\begin{center}
  \setlength{\unitlength}{0.4mm}
  \begin{picture}(162,157)
  \put(8,90){\vector(1,0){137}}
  \put(132,94){\small $x_1$}
  %
  \put(77.3,50){\line(0,1){90}}
  \qbezier(77,50)(47,64)(14,68)
  \qbezier(77,140)(47,154)(14,158)
  \dashline[+30]{2.2}(77.3,32)(77.3,164)
  \thicklines
  \put(77.3,75){\line(0,1){35}}
  \thinlines
  \qbezier[16](78,100)(89,102)(100,104)
  \put(102,102){\small $\Gammao'$}
  %
  \qbezier(77.3,124)(83.096,124)(87.198,119.898) 
  \qbezier(87.198,119.898)(91.3,115.796)(91.3,110) 
  \qbezier(91.3,75)(91.3,69.204)(87.198,65.102) 
  \qbezier(87.198,65.102)(83.096,61)(77.3,61) 
  \qbezier(77.3,124)(71.504,124)(67.402,119.898) 
  \qbezier(67.402,119.898)(63.3,115.796)(63.3,110) 
  \qbezier(63.3,75)(63.3,69.204)(67.402,65.102) 
  \qbezier(67.402,65.102)(71.504,61)(77.3,61) 
  \put(63.4,110){\line(0,-1){35}} \put(91.4,110){\line(0,-1){35}}
  \qbezier(77.3,118)(80.612,118)(82.956,115.656)
  \qbezier(82.956,115.656)(85.3,113.312)(85.3,110)
  \qbezier(85.3,75)(85.3,71.688)(82.956,69.344)
  \qbezier(82.956,69.344)(80.612,67)(77.3,67)
  \qbezier(77.3,118)(73.988,118)(71.644,115.656)
  \qbezier(71.644,115.656)(69.3,113.312)(69.3,110)
  \qbezier(69.3,75)(69.3,71.688)(71.644,69.344)
  \qbezier(71.644,69.344)(73.988,68)(77.3,67)
  \put(69.4,110){\line(0,-1){35}} \put(85.4,110){\line(0,-1){35}}
  \qbezier[11](85.4,80)(93.1,80)(100,80)
  \put(102,78){\small $U_1$}
  %
  \qbezier(77,110)(65,110)(55,120)
  \qbezier(55,120)(25,150)(25,100)
  \qbezier(25,100)(25,72)(60,72)
  \qbezier(60,72)(70,72)(77,75)
  \put(37,111){\small $\Omega'$}
  \put(61.5,131){\small $\Gammao$}
  \put(40,141.7){\small $\Gammap$} \put(40,51.7){\small $\Gammam$}
  \put(80,46){\small $B_0$} \put(80,137){\small $B_1$}
  \put(80,157){\small $\gammao$ (\small $x_1=d$)}
  \put(92,62){\small $U_2$}
  \put(8,10){Fig.~2: \ The sets $\Omega'$, $\Gammao'$, $U_1$ and $U_2$}
  \end{picture}
\end{center}
\end{wrapfigure}

Let $\eta$ be a $C^{\infty}$--function in $\R^2$, supported in
$\overline{U_2}$, such that $\eta=1$ in $U_1$ and $\eta$ is
symmetric with respect to the line $x_1=d$. (It means that
$\eta(d+\vartheta,x_2)=\eta(d-\vartheta,x_2)$ for all
$\vartheta,\, x_2\in\R$.)

Due to \cite{KMPT}, there exists a divergence--free extension
$\bfv_2$ of function $\bfv$ from $U_2\cap\R^2_{d-}$ to the whole
set $U_2$, such that $\bfv_2\in\bfW^{1,r}(U_2)$ and $\| \bfv_2
\|_{1,r;\, U_2}\leq c\, \|\bfv\|_{1,r}$, where $c$ is independent
of $\bfv$.

Since $\nabla\eta\cdot\bfv_2\in W^{1,r}_0(U_2)$ and
$\int_{U_2}\nabla\eta\cdot\bfv_2\; \rmd\bfx=0$, there exists (by
\cite[Theorem III.3.3]{Ga}) $\bfv_*\in\bfW^{2,r}_0(U_2)$, such
that $\div\bfv_*=\nabla\eta\cdot\bfv_2$ in $U_2$ and
\begin{align*}
\|\bfv_*\|_{2,r;\, U_2}\ &\leq\ c\,
\|\nabla\eta\cdot\bfv_2\|_{1,r;\, U_2} \\
&\leq\ c\, \|\bfv_2\|_{1,r;\, U_2}\ \leq\ c\, \|\bfv\|_{1,r},
\end{align*}
where $c$ is independent of $\bfv$. Extending $\bfv_*$ by zero to
$\R^2\smallsetminus U_2$, we have $\|\bfv_*\|_{2,r}\leq c\,
\|\bfv\|_{1,r}$. Put
\begin{equation}
\bfvt\ :=\ \eta\bfv_2-\bfv_*, \qquad \pt\ :=\ \eta\br p.
\label{3.20}
\end{equation}
Function $\bfvt$ is divergence--free, belongs to
$\bfW^{1,r}_0(U_2)$ and satisfies the estimates
\begin{displaymath}
\|\bfvt\|_{1,r;\, U_2}\ \leq\ c\, \bigl( \|\bfv_2\|_{1,r;\,
U_2}+\|\bfv_*\|_{1,r;\, U_2} \bigr)\ \leq\ c\, \|\bfv_2\|_{1,r;\,
U_2}\ \leq\ c\, \|\bfv\|_{1,r},
\end{displaymath}
where $c$ is independent of $\bfv$. The functions $\bfvt$, $\pt$
(defined by (\ref{3.20}) in $U_2\cap\R^2_{d-}$ and extended by
zero to $\R^2_{d-}$) satisfy the equation (\ref{3.14}) a.e.~in the
half-plane $\R^2_{d-}$, where function $\bfft$ now satisfies
\begin{displaymath}
\bfft\ :=\ \eta\, \div\bbF-2\nu\br\nabla\eta\cdot\nabla\bfv-\nu\,
(\Delta\eta)\, \bfv-(\nabla\eta)\, p+\nu\br\eta\Delta
\bfg_*+\nu\Delta\bfv_* \qquad \mbox{in}\ U_2\cap\R^2_{d-}
\end{displaymath}
and $\bfft:=\bfzero$ in $\R^2_{d-}\smallsetminus U_2$. This
function, although it is different from function $\bfft$ from the
proof of Lemma \ref{L7}, satisfies the estimate (\ref{3.17}).

Note that
\begin{align}
\nu\, \frac{\partial\bfvt}{\partial\bfn}-\pt\br\bfn\ &=\ \nu\eta\,
\frac{\partial\bfv}{\partial\bfn}-\nu\,
\frac{\partial\bfv_*}{\partial\bfn}-\eta\br p\br\bfn\ =\ \eta\,
\Bigl( \nu\, \frac{\partial\bfu}{\partial\bfn}-p\br\bfn\Bigr)-
\nu\, \frac{\partial\bfv_*}{\partial\bfn} \nonumber \\
&=\ -\eta\, \bbF\cdot\bfn-\nu\,
\frac{\partial\bfv_*}{\partial\bfn}\ =\ -\bfht \label{3.19}
\end{align}
a.e.~on $\Gammao$. We have used the identities $\bfu=\bfv+\bfgs$
(in $\Omega)$ and $\partial\bfgs/\partial\bfn=\bfzero$ (a.e.~on
$\Gammao$.

\vspace{4pt} \noindent
{\it Step 2.} Define
\begin{displaymath}
\bfht\ :=\ \eta\, \bbF\cdot\bfn+\nu\,
\frac{\partial\bfv_*}{\partial\bfn} \qquad (\mbox{in the sense of
traces on}\ \Gammao).
\end{displaymath}
Function $\bfht$ satisfies the estimates
\begin{align}
\|\bfht\|_{1-1/r,r;\, \Gammao}\ &\leq\ c\, \|\bbF\|_{1-1/r,r;\,
\Gammao}+\Bigl\| \frac{\partial\bfv_*}{\partial\bfn}
\Bigr\|_{1-1/r,r;\, \Gammao}\ \leq\ c\, \|\bbF\|_{1,r}+c\,
\|\bfv_*\|_{2,r;\, U_2} \nonumber \\
&\leq\ c\, \|\bbF\|_{1,r}+c\, \|\bfv\|_{1,r}. \label{3.21}
\end{align}
Let $\bbFt$ be a function in $W^{1,r}_{\rm per}(\Omega)^{2\times
2}$, provided by Lemma \ref{L6}, where we consider $\bfft$ instead
of $\bff$ and $\bfht$ instead of $\bfh$. Then $\div\bbFt=\bfft$
a.e.~in $\Omega$ and $\bbFt\cdot\bfn=\bfht$ a.e.~on $\Gammao$.
Moreover, due to (\ref{3.1}), (\ref{3.17}) and (\ref{3.21}),
\begin{equation}
\|\bbFt\|_{1,r}\ \leq\ c\, \|\bfft\|_r+c\, \|\bfht\|_{1-1/r,r;\,
\Gammao}\ \leq\ c \bigl(
\|\bbF\|_{1,r}+\|\bfgs\|_{2,r}+\|\bfv\|_{1,r} \bigr). \label{3.22}
\end{equation}
Let the functional $\bfFt\in\Vsd{r}$ be defined by the same
formula as (\ref{2.3}), where we only consider $\bbFt$ instead of
$\bbF$. We claim that $\nu\cA_r\bfvt=\bfFt$. Indeed, applying
(\ref{3.19}), we obtain for any $\bfw\in\Vs{r'}$:
\begin{align*}
\nu\, \langle\cA_r\bfvt & ,\bfw\rangle_{(\vsd{r},\vs{r'})}\ =\
\nu\int_{\Omega} \nabla\bfvt:\nabla\bfw\; \rmd\bfx\ =\
\int_{\Gammao}\nu\, \frac{\partial\bfvt}{\partial\bfn}\cdot\bfw\;
\rmd l-\nu\int_{\Omega}\Delta\bfvt\cdot\bfw\; \rmd\bfx \\
&=\ \int_{\Gammao}\nu\, \frac{\partial\bfvt}{\partial\bfn}\cdot
\bfw\; \rmd l+\int_{\Omega}(-\nabla\pt+\bfft)\cdot\bfw\; \rmd\bfx \\
&=\ \int_{\Gammao} \Bigl[ \nu\,
\frac{\partial\bfvt}{\partial\bfn}-\pt\br\bfn \Bigr]\cdot\bfw\;
\rmd l+\int_{\Omega}\div\bbFt\cdot\bfw\; \rmd\bfx \\
&=\ -\int_{\Gammao} \bfht\cdot\bfw\; \rmd l+
\int_{\Gammao}(\bbFt\cdot\bfn)\cdot\bfw\; \rmd l-\int_{\Omega}
\bbFt:\nabla\bfw\; \rmd\bfx \\
&=\ -\int_{\Omega} \bbFt:\nabla\bfw\; \rmd\bfx\ =\
\langle\bfFt,\bfw\rangle_{(\vsd{r},\vs{r'})}.
\end{align*}

\noindent
{\it Step 3.} In this part, we apply the method of difference
quotients (see \cite{Ag}, \cite{Gr1} and \cite{So}) in order to
derive the estimate (\ref{3.13}).

Recall that $\bfft$ is defined in $\R^2_{d-}$ and supported in the
closure of $U_2\cap\R^2_{d-}$, and $\bfht$ is defined in $\Gammao$
and supported in the closure of $U_2\cap\gammao$. For
$\delta\in\R$, denote
\begin{displaymath}
D_2^{\delta}\bfft(x_1,x_2):=\frac{\bfft(x_1,x_2+\delta)-
\bfft(x_1,x_2)}{\delta} \quad \mbox{and} \quad
D_2^{\delta}\bfht(d,x_2):=\frac{\bfht(d,x_2+\delta)-
\bbFt(d,x_2)}{\delta}.
\end{displaymath}
$D_2^{\delta}\bfft$ and $D_2^{\delta}\bfht$ are the so called {\it
difference quotients.}

As $\bbFt\in W^{1,r}_{\rm per}(\Omega)^{2\times 2}$, it can be
extended from $\Omega$ to $\Omega\cup P$ so that the extended
function is in $W^{1,r}_{\rm per}(\Omega\cup P)^{2\times 2}$,
$\div\bbFt=\bbO$ in $P$ and $\|\bbFt\|_{1,r;\, \Omega\cup P}\leq
c\, \|\bbFt\|_{1,r}$. Furthermore, $\bbFt$ can be extended from
$\Omega\cup P$ to the stripe
$\R^2_{(0,d)}:=\{\bfx=(x_1,x_2)\in\R^2;\ 0<x_1<d\}$ as
$\tau$--periodic function in variable $x_2$, lying in
$W^{1,r}_{loc}(\R^2_{(0,d)})$. Let us denote the extension again
by $\bbFt$ and define
\begin{displaymath}
D_2^{\delta}\bbFt(x_1,x_2)\ :=\ \frac{\bbFt(x_1,x_2+\delta)-
\bbFt(x_1,x_2)}{\delta}.
\end{displaymath}
Denote $\bbFt_{\delta}(x_1,x_2):=\delta^{-1}
\int_0^{\delta}\bbFt(x_1,x_2+ \vartheta)\; \rmd\vartheta$. Then
\begin{displaymath}
D_2^{\delta}\bbFt(x_1,x_2)\ =\
\frac{1}{\delta}\int_0^{\delta}\partial_2\bbFt(x_1,x_2+\vartheta)\;
\rmd\vartheta\ =\ \partial_2\bbFt_{\delta}(x_1,x_2).
\end{displaymath}
Using the $\tau$--periodicity of the function $\bbFt_{\delta}$ in
variable $x_2$ in $\R^2_{(0,d)}$ and applying H\"older's
inequality, we get
\begin{align*}
\|\bbFt_{\delta} & \|_r^r\ =\ \int_{\Omega}\biggl|\frac{1}{\delta}
\int_0^{\delta} \bbFt(x_1,x_2+\vartheta)\; \rmd
\vartheta\biggr|^r\; \rmd\bfx\ =\ \int_0^d\int_0^{\tau}
\biggl|\frac{1}{\delta} \int_0^{\delta} \bbFt(x_1,x_2+\vartheta)\;
\rmd \vartheta\biggr|^r2\; \rmd x_2\, \rmd x_1 \nonumber \\
&\leq\ \int_0^d\int_0^{\tau}\frac{1}{\delta}
\int_0^{\delta}\bigl|\bbFt(x_1,x_2+\vartheta)\bigr|^r\;
\rmd\vartheta\, \rmd x_2\, \rmd x_1\ =\ \int_0^d\frac{1}{\delta}
\int_0^{\delta}\int_0^{\tau}\bigl|\bbFt(x_1,y_2)\bigr|^r\;
\rmd y_2\, \rmd\vartheta\, \rmd x_1 \nonumber \\
&=\ \int_0^d \int_0^{\tau}\bigl|\bbFt(x_1,y_2)\bigr|^r\; \rmd
y_2\, \rmd x_1\ =\ \int_{\Omega}\bigl|\bbFt(\bfx)\bigr|^r\;
\rmd\bfx\ =\ \|\bbFt\|_r^r.
\end{align*}
We can similarly show that $\|\nabla\bbFt_{\delta}\|_r^r\leq
\|\nabla\bbFt\|_r^r$. Consequently, $\|\bbFt_{\delta}\|_{1,r}\leq
\|\bbFt\|_{1,r}$. Thus,
\begin{equation}
\| D_2^{\delta}\bbFt\|_r\ =\ \|\partial_2\bbFt_{\delta}\|_r\ \leq\
\|\bbFt_{\delta}\|_{1,r}\ \leq\ \|\bbFt\|_{1,r}\ \leq\ c\, \bigl(
\|\bfft\|_r+\|\bfht\|_{1-1/r,r;\, \Gammao}\bigr).
\label{2.36}
\end{equation}
Let $D_2^{\delta}\bfvt$ and $D_2^{\delta}\pt$ be defined by
analogy with $D_2^{\delta}\bfft$ and $D_2^{\delta}\bbFt$. The
functions $D_2^{\delta}\bfvt$, $D_2^{\delta}\pt$ satisfy the
equations
\begin{align*}
-\nu\Delta D_2^{\delta}\bfvt+\nabla D_2^{\delta}\pt\ &=\
\div D_2^{\delta}\bbFt, \\
\div D_2^{\delta}\bfvt\ &=\ 0
\end{align*}
a.e.~in $\Omega$. Since
\begin{displaymath}
\nu\, \frac{\partial\bfvt}{\partial\bfn}-\pt\br\bfn\ =\ \nu\,
\bigl( \eta\, \frac{\partial\bfv}{\partial\bfn}-\frac{\partial
\bfv_*}{\partial\bfn}\Bigr)-\eta\br p\br\bfn\ =\ -\eta\,
\bbF\cdot\bfn-\nu\, \frac{\partial\bfv_*}{\partial\bfn}\ =\ -\bfht
\end{displaymath}
on $\gammao$, $D_2^{\delta}\bfvt$ and $D_2^{\delta}\pt$ also
satisfy the boundary condition
\begin{displaymath}
-\nu\, \frac{\partial
D_2^{\delta}\bfvt}{\partial\bfn}+D_2^{\delta}\pt\br\bfn\ =\
D_2^{\delta}\bfht
\end{displaymath}
on $\Gammao$. From this, one can deduce that $\nu\cA_r
D_2^{\delta}\bfvt=\bfFt_{\delta}$, where the functional
$\bfFt_{\delta}$ in $\Vsd{r}$ is defined by the same formula as
(\ref{2.3}), where we only consider $D_2^{\delta}\bbFt$ instead of
$\bbF$. It follows from Lemma \ref{L1} that
\begin{displaymath}
\|\nabla D_2^{\delta}\bfvt\|_r\ \leq\
\|\bfFt_{\delta}\|_{\vsd{r}}.
\end{displaymath}
Since $\|\bfFt_{\delta}\|_{\vsd{r}}\leq\| D_2^{\delta}\bbFt\|_r
\leq c\, \bigl( \|\bfft\|_r+\|\bfht\|_{1-1/r,r;\, \Gammao}\bigr)$,
we obtain
\begin{equation}
\|\nabla D_2^{\delta}\bfvt\|_r\ \leq\ c\, \bigl(
\|\bfft\|_r+\|\bfht\|_{1-1/r,r;\, \Gammao}\bigr).
\label{2.37}
\end{equation}
Applying further Theorem 1 (with $\bfg_*=\bfzero$), (\ref{2.36})
and (\ref{2.37}), we obtain the estimate of $D_2^{\delta}\pt$:
\begin{equation}
\|D_2^{\delta}\pt\|_r\ \leq\ c\, \bigl( \|\nabla
D_2^{\delta}\bfvt\|_r+\| D_2^{\delta}\bbFt\|_r \bigr)\ \leq\ c\,
\bigl( \|\bfft\|_r+\|\bfht\|_{1-1/r,r;\, \Gammao}\bigr).
\label{2.38}
\end{equation}
As the right hand sides of (\ref{2.37}) and (\ref{2.38}) are
independent of $\delta$, we may let $\delta$ tend to $0$ and we
obtain
\begin{equation}
\|\nabla\partial_r\bfvt\|_2+\|\partial_2\pt\|_r\ \leq\ c\, \bigl(
\|\bfft\|_r+\|\bfht\|_{1-1/r,r;\, \Gammao}\bigr).
\label{2.39}
\end{equation}
This shows that $\partial_1\partial_2\vt_1$, $\partial_2^2\vt_1$,
$\partial_1\partial_2\vt_2$, $\partial_2^2\vt_2$ and
$\partial_2\pt$ are all in $L^r(\Omega)$ and their norms are less
than or equal to the right hand side of (\ref{2.39}).
Consequently, as $\bfvt$ is divergence--free, the same statement
also holds on $\partial_1^2\vt_1$. Now, from (\ref{3.13})
(considering just the first scalar component of this vectorial
equation), we deduce that $\partial_1\pt\in L^r(\Omega)$. Finally,
considering the second scalar component in equation (\ref{3.13}),
we obtain $\partial_1^2\vt_2\in L^r(\Omega)$, too. Thus, applying
also (\ref{3.16}) and (\ref{3.17}), we obtain
\begin{displaymath}
\|\bfvt\|_{2,r}+\|\nabla\pt\|_r\ \leq\ c\, \bigl(
\|\bbF\|_{1,r}+\|\bfg_*\|_{2,r}+\|\bfv\|_{1,r} \bigr).
\end{displaymath}
This inequality, formulas (\ref{3.18}), the estimate of
$\|\bfv_*\|_{2,r}$ and the fact that $\eta=1$ on $U_1$, in
combination with Lemma \ref{L7}, yield (\ref{3.12}).
\end{proof}

\vspace{4pt} \noindent
The next corollary is an immediate consequence of Lemmas \ref{L7}
and \ref{L8}.

\begin{corollary} \label{C1}
Let $\Omega'$ be a sub-domain of $\, \Omega,\, $ such that $\,
\overline{\Omega'}\subset\Omega\cup\Gammai^0\cup\Gammaw\cup
\Gammao^0.\, $ Then $\bfv\in\bfW^{2,r}(\Omega')$, $\, p\in
W^{1,r}(\Omega')$ and estimate (\ref{3.12}) holds.
\end{corollary}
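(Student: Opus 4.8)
The plan is to localize via a two-piece open cover of $\overline{\Omega'}$ and then glue the $\bfW^{2,r}$ and $W^{1,r}$ regularity furnished by Lemmas \ref{L7} and \ref{L8}. First I would record the geometric separation that makes the splitting possible. The set $\overline{\Omega'}\cap\partial\Omega$ is compact and, by hypothesis, contained in $\Gammai^0\cup\Gammaw\cup\Gammao^0$; in particular it avoids the four corners $\Am,\Ap,\Bm,\Bp$ and the periodicity curves $\Gammam,\Gammap$. Since $\Gammao$ meets the rest of $\partial\Omega$ only at $\Bm$ and $\Bp$, the compact set $K_{\rm out}:=\overline{\Omega'}\cap\Gammao^0$ has positive distance from $\Gammam\cup\Gammap\cup\Gammaw\cup\Gammai$, and symmetrically $\overline{\Omega'}\cap(\Gammai^0\cup\Gammaw)$ has positive distance from $\Gammao$.

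Using this separation I would write $\Omega'=\Omega'_1\cup\Omega'_2$ as a union of two overlapping subdomains of $\Omega$ with $\overline{\Omega'_1}\subset\Omega\cup\Gammai^0\cup\Gammaw$ and $\overline{\Omega'_2}\subset\Omega\cup\Gammao^0$. Concretely, $\Omega'_2$ is the intersection of $\Omega'$ with a sufficiently thin neighbourhood of $K_{\rm out}$, and $\Omega'_1$ is $\Omega'$ with a slightly smaller such neighbourhood removed; the positive-distance property guarantees that such a choice keeps $\overline{\Omega'_1}$ away from $\Gammao$ and $\overline{\Omega'_2}$ away from the Dirichlet and periodicity parts of the boundary. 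Applying Lemma \ref{L7} on $\Omega'_1$ and Lemma \ref{L8} on $\Omega'_2$ then yields $\bfv\in\bfW^{2,r}(\Omega'_j)$, $p\in W^{1,r}(\Omega'_j)$ together with estimate (\ref{3.13}) on each $\Omega'_j$.

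Finally I would glue. As $\{\Omega'_1,\Omega'_2\}$ is an open cover of $\Omega'$ and membership in $\bfW^{2,r}$ and $W^{1,r}$ is a local property, it follows that $\bfv\in\bfW^{2,r}(\Omega')$ and $p\in W^{1,r}(\Omega')$. Combining the two instances of (\ref{3.13}), whose right-hand sides coincide and are independent of the particular subdomain, gives the estimate on all of $\Omega'$ with a constant $c=c(\nu,\Omega,\Omega')$ absorbing the two local constants. I do not anticipate any real obstacle: the corollary is genuinely immediate once the cover is set up, and the only point requiring care is the geometric separation of $\Gammao^0$ from the Dirichlet and periodicity portions of $\partial\Omega$, which holds precisely because these pieces meet only at the corners $\Am,\Ap,\Bm,\Bp$, none of which belongs to $\overline{\Omega'}$.
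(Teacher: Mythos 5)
Your proposal is correct and matches the paper's intent: the paper itself states Corollary \ref{C1} as ``an immediate consequence of Lemmas \ref{L7} and \ref{L8},'' and your two-piece cover with gluing is exactly the argument being left implicit. Note only that the estimate you correctly invoke and combine is (\ref{3.13}); the corollary's citation of (\ref{3.12}) (an $L^2$ bound) appears to be a typo in the paper.
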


\begin{lemma} \label{L9}
Let $\Omega'$ be a sub-domain of $\Omega$, such that
$\partial\Omega'\cap\Gammaw=\emptyset$ and $\Gammap\subset
\partial\Omega'$. Then
$\bfv\in\bfW^{2,r}(\Omega')$, $p\in W^{1,r}(\Omega')$ and
\begin{equation}
\|\bfv\|_{2,r;\, \Omega'}+\|\nabla p\|_{r;\, \Omega'}\ \leq\ c\,
\bigl( \|\div\bbF\|_r+\|\bfg_*\|_{2,r}+\|\bfv\|_{1,r} \bigr),
\label{3.26}
\end{equation}
where $c=c(\nu,\Omega,\Omega')$.
\end{lemma}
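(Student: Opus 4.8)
The plan is to exploit the $\tau$--periodicity in the $x_2$--direction to convert $\Gammap$ from a boundary curve into an interior cut, after which the regularity near $\Gammap$ reduces to interior regularity away from $\gammai$ and $\gammao$, and to the flat--boundary situations already handled in Lemmas \ref{L7} and \ref{L8} near the two endpoints $\Ap$ and $\Bp$. Concretely, since $\Gammap=\Gammam+\tau\br\bfe_2$, a one--sided neighbourhood of $\Gammap$ inside $\Omega$, glued to the neighbourhood of $\Gammam$ shifted upward by $\tau\br\bfe_2$, forms a full two--sided neighbourhood of $\Gammap^0$ in the infinite strip $\cO$; on this neighbourhood I would work with the periodic extensions of $\bfv$, $p$, $\bbF$ and $\bfgs$.

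First I would set up the extension and check that it yields a genuine solution across the cut. By definition of $\Vs{r}$ the trace of $\bfv$ satisfies the periodicity condition (\ref{1.5}); from the $L^2$--theory of \cite{TNe4} used at the start of the proof the associated pressure is periodic, i.e.~(\ref{1.7}) holds; $\bbF\in W^{1,r}_{\rm per}(\Omega)^{2\times 2}$ by Lemma \ref{L6}; and $\bfgs$ may be chosen periodic and satisfying (\ref{1.6}) by Remark \ref{R2}(d). Extending all four quantities $\tau$--periodically in $x_2$ therefore produces functions well defined on a two--sided neighbourhood of $\Gammap^0$ in $\cO$. The crucial point is that the extended pair is a distributional solution of (\ref{2.5}) and (\ref{1.2}) across $\Gammap$, with no spurious single-- or double--layer distribution supported on $\Gammap$: condition (\ref{1.5}) makes $\bfv$ continuous across the cut, condition (\ref{1.6}) makes $\nabla\bfv$ match (its sign compensates the reversal of the outer normal from $\Gammam$ to $\Gammap$), and (\ref{1.7}) makes $p$ continuous, so all jump terms produced by integrating by parts against a test function supported near $\Gammap^0$ cancel.

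With $\Gammap$ now interior I would cover it by finitely many balls and estimate on each. On balls whose closure lies in the open strip $\R^2_{(0,d)}$, that is, away from $\Ap$ and $\Bp$, I would apply the interior estimate of \cite[Proposition I.2.3]{Te} exactly as in Lemma \ref{L7}. Near $\Ap$ the extended domain meets the \emph{straight} inflow line $\gammai$, on which $\bfv$ has homogeneous Dirichlet data, so the flat--Dirichlet argument of Lemma \ref{L7} applies; near $\Bp$ it meets the \emph{straight} outflow line $\gammao$, on which the ``do nothing'' condition (\ref{2.6}) holds, so the reflection plus tangential difference--quotient argument of Lemma \ref{L8} applies verbatim. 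Summing the resulting local estimates against a subordinate partition of unity gives $\bfv\in\bfW^{2,r}(\Omega')$, $p\in W^{1,r}(\Omega')$ and the bound (\ref{3.26}); the analogous statement for $\Gammam$ then follows by periodicity.

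The main obstacle I anticipate is the treatment of the two endpoints $\Ap$ and $\Bp$, which are genuine corners of $\Omega$ where the periodic condition meets the Dirichlet condition, respectively the ``do nothing'' condition. The periodic extension is precisely what removes the corner: because $\gammai$ and $\gammao$ are straight lines and the conditions (\ref{1.5})--(\ref{1.7}) glue the two sides of the cut smoothly, the points $\Ap$ and $\Bp$ become ordinary points of a $C^{\infty}$ boundary of $\cO$, so no corner--singularity theory is needed and the flat--boundary techniques of Lemmas \ref{L7} and \ref{L8} suffice. Verifying rigorously that the glued configuration is indeed smooth across $\Ap$ and $\Bp$, and that the extended data $\bbF$ and $\bfgs$ are consistent there, is the step demanding the most care.
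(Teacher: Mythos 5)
Your proposal is correct and follows essentially the same route as the paper: there, too, the $\tau$--periodicity is used to move $\Gammap$ away from the ``bad'' part of the boundary --- the paper transplants $\bfv$, $\bbF$ and $\bfg_*$ by exactly the periodic extension you describe onto the shifted period domain $\Omega^{\delta}$ (bounded by $\Gammai^{\delta}$, $\Gammam^{\delta}$, $\Gammao^{\delta}$, $\Gammap^{\delta}$ and $\Gammaw$), in which $\Gammap^0$ lies in the union of the interior with the open segments $\Gammai^{\delta}$ and $\Gammao^{\delta}$, so that Corollary \ref{C1} applies directly. The one implementation difference is the verification step: instead of checking, as you do, that the glued pair solves (\ref{2.5}), (\ref{1.2}) distributionally across the cut via the trace conditions (\ref{1.5})--(\ref{1.7}), the paper verifies the weak identity $\nu\cA_r^{\delta}\bfv^{\delta}=\bfF^{\delta}+\nu\br\bfG^{\delta}$ on $\Omega^{\delta}$ by a change of variables in the variational formulation --- a step your sketch leaves implicit but genuinely needs, since the difference--quotient argument of Lemma \ref{L8} near $\Bp$ is not purely local (it invokes Lemma \ref{L1}, i.e.~the invertibility of the Stokes operator on a full period domain), so your ``applies verbatim'' must in the end be read as ``apply Corollary \ref{C1} on the shifted period domain''.
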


\begin{proof}
Consider $\delta>0$ and denote
\begin{align*}
\Am^{\delta} &:= \Am+\delta\br\bfe_2, & \Ap^{\delta} &=
\Ap+\delta\br\bfe_2, & \Bm^{\delta} &:=\Bm+\delta\br\bfe_2,
& \Bp^{\delta} &= \Bp+\delta\br\bfe_2, \\
\Gammai^{\delta} &:= \Gammai+\delta\br\bfe_2, & \Gammam^{\delta}
&= \Gammam+\delta\br\bfe_2, & \Gammap^{\delta} &:=
\Gammap+\delta\br\bfe_2, & \Gammao^{\delta} &=
\Gammao+\delta\br\bfe_2,
\end{align*}
where $\bfe_2$ is the unit vector in the direction of the
$x_2$--axis. Suppose that $\delta$ is so small positive number
that the curve $\Gammam^{\delta}$ still lies below the profile
$P$, which means that $P\subset\{(x_1,y_2)\in\R^2;\ y_2>x_2,\
(x_1,x_2)\in\Gammam^{\delta}\}$. (Recall that $P=\overline{{\rm
Int}\, \Gammaw}$, see Fig.~1.) Denote by $\Omega^{\delta}$ the
domain bounded by the curves $\Gammai^{\delta}$,
$\Gammam^{\delta}$, $\Gammao^{\delta}$, $\Gammap^{\delta}$ and
$\Gammaw$. Denote by $\bfv^{\delta}$ the function, defined by the
formulas
\begin{equation}
\bfv^{\delta}(x_1,x_2)\ :=\ \left\{ \begin{array}{ll}
\bfv(x_1,x_2) & \mbox{for}\ (x_1,x_2)\in \Omega^{\delta}\cap\Omega, \\
[2pt]  \bfv(x_1,x_2-\tau) & \mbox{for}\ (x_1,x_2)\in
\Omega^{\delta} \smallsetminus\Omega. \end{array} \right.
\label{3.27}
\end{equation}
Furthermore, denote
\begin{align*}
\bbF^{\delta}(x_1,x_2)\ &:=\ \left\{ \begin{array}{ll}
\bbF(x_1,x_2) & \mbox{for}\ (x_1,x_2)\in \Omega^{\delta}\cap\Omega, \\
[2pt] \bbF(x_1,x_2-\tau) & \mbox{for}\ (x_1,x_2)\in\Omega^{\delta}
\smallsetminus\Omega, \end{array} \right. \\
\bfg_*^{\delta}(x_1,x_2)\ &:=\ \left\{ \begin{array}{ll}
\bfg_*(x_1,x_2) & \mbox{for}\ (x_1,x_2)\in \Omega^{\delta}\cap\Omega, \\
[2pt] \bfg_*(x_1,x_2-\tau) & \mbox{for}\
(x_1,x_2)\in\Omega^{\delta} \smallsetminus\Omega. \end{array}
\right.
\end{align*}
Let the spaces $\Vsdelta{r}$ and $\Vsddelta{r}$ be defined by
analogy with $\Vs{r}$ $\Vsd{r}$, respectively, and let operator
$\cA_r^{\delta}$ be defined in the same way as $\cA_r$, with the
only difference that it acts from $\Vsdelta{r}$ to $\Vsddelta{r}$.
Obviously, $\bbF^{\delta}\in W^{1,r}(\Omega^{\delta})^{2\times 2}$
and $\|\bbF^{\delta}\|_{1,r;\, \Omega^{\delta}}=\|\bbF\|_{1,r}$.
Similarly, the function $\bfg_*^{\delta}$ has the same norm and
properties in $\Omega^{\delta}$ as the function $\bfg_*$ in
$\Omega$. Let the functionals $\bfF^{\delta}$ and $\bfG^{\delta}$
in the dual space $\Vsddelta{r}$ be defined by analogous formulas
as $\bfF$ and $\bfG$ in (\ref{2.3}) and (\ref{2.4}).

Our next claim is to show that $\bfv^{\delta}\in\Vsdelta{r}$ and
$\nu\cA_r^{\delta}\bfv^{\delta}=\bfF^{\delta}+\nu\,
\bfG^{\delta}$. Since $\bfv\in\Vs{r}$, there exists a sequence
$\{\bfv_n\}$ in $\cCs$, such that $\bfv_n\to\bfv$ in the norm of
$\bfW^{1,r}(\Omega)$. Let $\bfv_n^{\delta}$ be defined by
analogous formulas as $\bfv^{\delta}$. Then $\bfv_n^{\delta}\in
\boldsymbol{\cC}_{\sigma}^{\infty}(\Omega^{\delta})$ and
$\bfv_n^{\delta}\to\bfv^{\delta}$ in
$\bfW^{1,r}(\Omega^{\delta})$. This confirms that
$\bfv^{\delta}\in\Vsdelta{r}$. Furthermore, let $\bfw\in\Vs{r'}$
and $\bfw^{\delta}\in\Vsdelta{r'}$ be related in the same way as
$\bfv$ and $\bfv^{\delta}$. Then, denoting by $\langle\, .\, ,\,
.\, \rangle_{[\Vsddelta{r},\Vsddelta{r}]}$ the duality pairing
between $\Vsddelta{r}$ and $\Vsdelta{r'}$, we have
\begin{align*}
\langle\nu\cA_r^{\delta} & \bfv^{\delta}
,\bfw^{\delta}\rangle_{[\Vsddelta{r},\Vsddelta{r}]}\ =\
\nu\int_{\Omega^{\delta}}\nabla\bfv^{\delta}:\nabla\bfw^{\delta}\;
\rmd\bfx \\
&=\ \nu \int_{\Omega^{\delta}\cap\Omega}\nabla\bfv^{\delta}:\nabla
\bfw^{\delta}\; \rmd\bfx+\nu\int_{\Omega^{\delta}\smallsetminus
\Omega} \nabla\bfv^{\delta}:\nabla\bfw^{\delta}\; \rmd\bfx \\
&=\ \nu \int_{\Omega^{\delta}\cap\Omega}\nabla\bfv:\nabla\bfw\;
\rmd\bfx+\nu\int_0^{\delta} \int_{\Gammap+s\,
\rme_2}\nabla\bfv^{\delta}:\nabla\bfw^{\delta}\;
\rmd l\, \rmd s \\
&=\ \nu\int_{\Omega^{\delta}\cap\Omega}\nabla\bfv:\nabla\bfw\;
\rmd\bfx+\nu\int_0^{\delta} \int_{\Gammam+s\, \rme_2}\nabla\bfv:
\nabla\bfw\; \rmd l\, \rmd s \\ \noalign{\vskip 4pt}
&=\ \nu\int_{\Omega}\nabla\bfv:\nabla\bfw\; \rmd\bfx\ =\
\langle\nu\cA_r\bfv,\bfw \rangle_{\sigma}\ =\
\langle\bfF,\bfw\rangle_{\sigma}+\langle\bfG, \bfw\rangle_{\sigma}
\\ \noalign{\vskip 2pt}
&=\ -\int_{\Omega}\bbF:\nabla\bfw\; \rmd\bfx+
\int_{\Omega}\nabla\bfg_*:\nabla\bfw\; \rmd\bfx\ =\
-\int_{\Omega^{\delta}} \bbF^{\delta}:\nabla\bfw^{\delta}\;
\rmd\bfx+\int_{\Omega^{\delta}}\nabla\bfg_*^{\delta}:
\nabla\bfw^{\delta}\; \rmd\bfx \\ \noalign{\vskip 4pt}
&=\ \langle\bfF^{\delta},\bfw^{\delta}\rangle_{[\Vsddelta{r},
\Vsddelta{r}]}+\langle\bfG^{\delta},\bfw^{\delta}
\rangle_{[\Vsddelta{r},\Vsddelta{r}]}.
\end{align*}
As this holds for all $\bfw^{\delta}\in\Vsdelta{r'}$, we observe
that $\nu\cA_r^{\delta}\bfv^{\delta}=\bfF^{\delta}+\nu\,
\bfG^{\delta}$.

Denote $(\Omega')^{\delta/2}:=\Omega'\cup\{(x_1,y_2)\in\R^2;\
x_2\leq y_2<x_2+\frac{1}{2}\delta$ for $(x_1,x_2)\in\Gammap\}$.
Then $(\Omega')^{\delta/2}$ is a sub-domain of $\Omega^{\delta}$,
such that $\Gammap^0\subset(\Omega')^{\delta}$. The statements of
Lemma \ref{L9} now follow from Corollary \ref{C1}, applied to the
equation $\nu\cA^{\delta}\bfv^{\delta}=
\bfF^{\delta}+\nu\br\bfG^{\delta}$ in domain $\Omega^{\delta}$,
where we consider $(\Omega')^{\delta/2}$ instead of $\Omega'$.
\end{proof}

\vspace{4pt} \noindent
{\bf Completion of the proof of Theorem \ref{T2} in the case
$r\geq 2$.} \ An analogue of Lemma \ref{L9} also holds if one
considers $\Omega'$, satisfying the condition
$\Gammam\subset\partial\Omega'$ instead of
$\Gammap\subset\partial\Omega'$. This and Lemmas
\ref{L7}--\ref{L9} complete the proof of the statements 1) and 5)
of Theorem \ref{T2}.

As the function $\bfu^{\delta}:=\bfg_*^{\delta}+\bfv^{\delta}$,
where $\bfg_*^{\delta}$ and $\bfv^{\delta}$ are the functions from
the proof of Lemma \ref{L9}, belongs to
$\bfW^{2,r}((\Omega')^{\delta})$, the trace of
$\nabla\bfu^{\delta}$ on $\Gammap$ belongs to
$W^{1-1/r,r}(\Gammap)^{2\times 2}$. This and the relation between
the functions $\bfu^{\delta}$ and $\bfu$ (following from the
definition of $\bfv^{\delta}$ and $\bfg_*^{\delta}$) implies that
the trace of $\nabla\bfu$ on $\Gammap$ ``from below'' (i.e.~from
$\Omega$) equals the trace of $\nabla\bfu$ on $\Gammam$ ``from
above'' (i.e.~again from $\Omega$). This implies the validity of
the condition of periodicity (\ref{1.6}). The validity of
condition (\ref{1.7}) can be proven by means of the same
arguments.

\vspace{4pt} \noindent
{\bf The case $1<r<2$.} \ There exist sequences $\{\bff^n\}$,
$\{\bfh^n\}$ and $\bfg^n\}$ in $\bfL^2(\Omega)$,
$\bfW^{1/2,2}_{\rm per}(\Gammao)$ and $\bfW^{3/2,2}(\Gammai)$,
respectively, such that $\bff^n\to\bff$ in $\bfL^r(\Omega)$,
$\bfh^n\to\bfh$ in $\bfW^{1-1/r,r}_{\rm per}(\Gammao)$ and
$\bfg^n\to\bfg$ in $\bfW^{2-1/r,r}_{\rm per}(\Gammai)$ for
$n\to\infty$. Let $\bbF^n$ and $\bfg_*^n$ be the functions, given
by by Lemma \ref{L6}, Lemma \ref{L3} and Remark \ref{R2} in case
that we consider $\bff^n$, $\bfh^n$ and $\bfg^n$ instead of
$\bff$, $\bfh$ and $\bfg$, respectively. Let the functionals
$\bfF^n$ and $\bfG^n$ (corresponding to $\bbF^n$ and $\bfg_*^n$)
be defined by formulas (\ref{2.3}) and (\ref{2.4}), respectively.
Then it follows from \cite[Theorem 2]{TNe4}, and also from the
first part of this proof (where we assumed that $r\geq 2$), that
the unique solution $\bfv^n$ of the equation
$\nu\cA_2\bfv^n=\bfF^n+\nu\br\bfG^n$ belongs to
$\Vs{r}\cap\bfW^{2,r}(\Omega)$ and the associated pressure $p^n$
lies in $W^{1,r}(\Omega)$, the functions $\bfu^n:=\bfg_*^n+\bfv^n$
and $p^n$ satisfy equations (\ref{1.1}) (with $\bff^n=\div\bbF^n$)
and (\ref{1.2}) a.e.~in $\Omega$, $\bfu^n$, $p^n$ satisfy boundary
conditions (\ref{1.3}), (\ref{1.4}) and (\ref{1.8}) in the sense
of traces on $\Gammai$, $\Gammaw$ and $\Gammao$, respectively,
$\bfu^n$, $p^n$ satisfy the conditions of periodicity
(\ref{1.5})--(\ref{1.7}) in the sense of traces on $\Gammam$ and
$\Gammap$ and
\begin{equation}
\|\bfu^n\|_{2,2}+\|\nabla p^n\|_2\ \leq\ c\, \bigl( \|\bff^n\|_2+
\|\bfg^n\|_{3/2,2;\, \Gammai}+\|\bfh^n\|_{1/2,2;\, \Gammao}
\bigr), \label{3.28}
\end{equation}
where $c=c(\nu,\Omega)$. However, the estimate (\ref{3.11}) does
not follow from (\ref{3.28}) by the limit transition for
$n\to\infty$, because the norms $\|\bff^n\|_2$,
$\|\bfg^n\|_{3/2,2;\, \Gammai}$ and $\|\bfh^n\|_{1/2,2;\,
\Gammao}$ may tend to infinity if $n\to\infty$. Nevertheless,
repeating the procedures from the proofs of Lemmas
\ref{L7}--\ref{L9}, we also derive that
\begin{equation}
\|\bfu^n\|_{2,r}+\|\nabla p^n\|_r\ \leq\ c\, \bigl( \|\bff^n\|_r+
\|\bfg^n\|_{2-1/r,r;\, \Gammai}+\|\bfh^n\|_{1-1/r,r;\, \Gammao}
\bigr), \label{3.29}
\end{equation}
where $c=c(\nu,\Omega,r)$. The limit transition for $n\to\infty$
yields (\ref{3.11}). \hfill $\square$

\medskip \noindent
{\bf Acknowledgement.} This work was supported by European
Regional Development Fund-Project ``Center for Advanced Applied
Science'' No.~CZ.02.1.01/0.0/0.0/16\_019/0000778.

\bigskip \medskip
\hspace{1.2pt} \begin{tabular}{ll} Author's address: \quad &
Tom\'a\v{s} Neustupa \\ & Czech Technical University \\ & Faculty
of
Mechanical Engineering \\ & Department of Technical Mathematics \\
& Karlovo n\'am.~13, 121 35 Praha 2 \\ & Czech Republic \\ &
e-mail: \ tomas.neustupa@fs.cvut.cz
\end{tabular}

\end{document}